\numberwithin{equation}{section}
\newtheorem{theorem}{Theorem}[section]
\newtheorem{lemma}[theorem]{Lemma}
\newtheorem{proposition}[theorem]{Proposition}
\newtheorem{corollary}[theorem]{Corollary}
\theoremstyle{definition}
\theoremstyle{remark}
\newtheorem{remark}[theorem]{Remark}
\newtheorem{fact}[theorem]{Fact}
\newtheorem{example}[theorem]{Example}
\newtheorem{observation}[theorem]{Observation}
\newtheorem{discussion}[theorem]{Discussion}
\newtheorem{question}[theorem]{Question}
\newcommand{\D}{\operatorname{\mathcal{D}}}
\newcommand{\Cl}{\operatorname{Cl}}
\newcommand{\Ass}{\operatorname{Ass}}
\newcommand{\im}{\operatorname{im}}
\newcommand{\Spec}{\operatorname{Spec}}
\newcommand{\spec}{\operatorname{spec}}
\newcommand{\Mod}{\operatorname{mod}}
\newcommand{\rad}{\operatorname{rad}}
\newcommand{\red}{\operatorname{red}}
\newcommand{\Ht}{\operatorname{ht}}
\newcommand{\f}{\operatorname{f}}
\newcommand{\pd}{\operatorname{p.dim}}
\newcommand{\fgrade}{\operatorname{fgrade}}
\newcommand{\proj}{\operatorname{proj}}
\newcommand{\UFD}{\operatorname{UFD}}
\newcommand{\K}{\operatorname{\mathcal{K}}}
\newcommand{\G}{\operatorname{G}}
\newcommand{\Gg}{\operatorname{G1}}
\newcommand{\GGg}{\operatorname{G3}}
\newcommand{\HH}{\operatorname{H}}
\newcommand{\nil}{\operatorname{nil}}
\newcommand{\V}{\operatorname{V}}
\newcommand{\id}{\operatorname{id}}
\newcommand{\Ext}{\operatorname{Ext}}
\newcommand{\Se}{\operatorname{S}}
\newcommand{\cd}{\operatorname{cd}}
\newcommand{\R}{\operatorname{R}}
\newcommand{\Supp}{\operatorname{Supp}}
\newcommand{\Hom}{\operatorname{Hom}}
\newcommand{\Proj}{\operatorname{Proj}}
\newcommand{\depth}{\operatorname{depth}}
\newcommand{\DD}{\operatorname{D}}
\newcommand{\EE}{\operatorname{E}}
\newcommand{\Char}{\operatorname{char}}
\newcommand{\coker}{\operatorname{coker}}
\newcommand{\vpl}{\operatornamewithlimits{\varprojlim}}
\newcommand{\lo}{\longrightarrow}
\newcommand{\fm}{\mathfrak{m}}
\newcommand{\fp}{\frak{p}}
\newcommand{\fq}{\frak{q}}
\newcommand{\fa}{\frak{a}}
\newcommand{\fb}{\frak{b}}
\begin{document}

\author[]{mohsen asgharzadeh}

\address{}
\email{mohsenasgharzadeh@gmail.com}

\title[ ]
{	Algebrization of some complete modules}

\subjclass[2010]{ Primary  13B35, 13J10}
\keywords{algebraic modules; completion; descent-method; Ext-modules; formal regular functions.}

\begin{abstract}
	Let $(R,\mathfrak{m})$ be a Noetherian local ring and $\widehat{R}$ its $\mathfrak{m}$-adic completion. 
	We study the problem of determining when a finitely generated $\widehat{R}$-module arises from an $R$-module, i.e., when it is \emph{algebraic}. 
	We introduce and investigate the class of \emph{strongly algebraic} modules, those complete modules all of whose direct summands are algebraic.
	Our approach unifies and extends several known results of Levy--Odenthal, Weston, Peskine--Szpiro, Puthenpurakal, and several others, and provides new examples and homological criteria for algebrization.
	Applications include a computation of the Grothendieck group $G_0(R)$ in dimension one and new algebrization results for generalized Cohen--Macaulay modules and vector bundles along with a connection to local cohomology 
 modules.
\end{abstract}

\maketitle

\section{Introduction}

In this paper, $(R,\fm)$ denotes a commutative Noetherian local ring. By \emph{completion} we mean completion with respect to the $\fm$-adic topology. 
The categories of finitely generated modules over $R$ and over $\widehat{R}$ are, in general, different. 
For instance, there exist situations in which $\Mod(R)$ is countable while $\Mod(\widehat{R})$ is uncountable. 
Our aim is to understand $\Mod(\widehat{R})$ via certain data from $\Mod(R)$ and to present some applications. 
These questions have been investigated by several authors. 
In what follows, we survey some of the corresponding results, complement them, and also present new observations. 
To this end, we begin by recalling a few definitions.

By a \emph{complete module} we mean a finitely generated $\widehat{R}$-module. 
A complete module $\mathcal{M}$ is called \emph{algebraic} if there exists a finitely generated $R$-module $M$ such that $\widehat{M} \simeq \mathcal{M}$. 
Sometimes, such a module $\mathcal{M}$ is said to be \emph{extended from} a finitely generated $R$-module (see, for instance, \cite{sean}). 
A complete module such that each of its direct summands is algebraic is called \emph{strongly algebraic}. 
This is a new notion.

\begin{observation}
	Let $R$ be a local domain whose integral closure $\overline{R}$ is a finite $R$-module but not local. 
	If $R$ has an isolated singularity, then $(\overline{R})^{\widehat{}}$ is algebraic but not strongly algebraic.
\end{observation}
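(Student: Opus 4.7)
The ``algebraic'' assertion is immediate from the definition: taking $M:=\overline{R}$, a finitely generated $R$-module, yields $\widehat{M}=(\overline{R})^{\widehat{}}=\mathcal{M}$, so $\mathcal{M}$ is algebraic.

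For the claim that $\mathcal{M}$ is not strongly algebraic, my plan is to exhibit an explicit direct summand that fails to be algebraic. Let $\fn_1,\ldots,\fn_n$ (with $n\ge 2$) be the maximal ideals of $\overline{R}$. Since $\overline{R}$ is module-finite and semilocal over the local ring $R$, its $\fm$-adic completion splits as a product of local rings, and therefore as a $\widehat{R}$-module,
\[
(\overline{R})^{\widehat{}}\;\cong\;\prod_{i=1}^n U_i,\qquad U_i:=\widehat{\overline{R}_{\fn_i}},
\]
so each factor $U_i$ is a direct $\widehat{R}$-summand of $\mathcal{M}$. I will show that every $U_i$ is non-algebraic. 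The structural inputs I intend to invoke are: (i) Nagata's theorem applied to the reduced local ring $R$ with finite normalization, yielding that $R$ is analytically unramified, i.e.\ $\widehat{R}$ is reduced; (ii) the isolated singularity hypothesis, which forces each localization $\overline{R}_{\fn_i}$ to be a normal local ring of isolated singularity, hence analytically normal, so each $U_i$ is a complete normal domain; and (iii) the resulting identification of $(\overline{R})^{\widehat{}}$ with the normalization of $\widehat{R}$ in its total ring of fractions. Items (i)--(iii) match the product decomposition $\prod_i U_i$ with the minimal primes $\fq_1,\ldots,\fq_n$ of $\widehat{R}$: each $\widehat{R}/\fq_i\hookrightarrow U_i$ is an integral extension, so $\Ann_{\widehat{R}}(U_i)=\fq_i$; since $\widehat{R}$ is reduced with $n\ge 2$ minimal primes (so $\bigcap_j \fq_j=0$), every individual $\fq_i$ is nonzero.

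The contradiction is then routine flat base change. Suppose $U_i\cong\widehat{N}$ for some finitely generated $R$-module $N$; then, since annihilators of finitely generated modules commute with flat base change,
\[
\fq_i\;=\;\Ann_{\widehat{R}}(U_i)\;=\;\Ann_{\widehat{R}}(\widehat{N})\;=\;\Ann_R(N)\cdot\widehat{R}.
\]
Faithful flatness of $R\to\widehat{R}$ yields $\Ann_R(N)=\fq_i\cap R$, and flatness preserves heights, so $\Ht(\fq_i\cap R)=\Ht(\fq_i)=0$; i.e.\ $\Ann_R(N)$ is a minimal prime of $R$. But $R$ is a domain, so $\Ann_R(N)=0$, forcing $\fq_i=0$ and contradicting $\fq_i\neq 0$. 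Hence no $U_i$ is algebraic and $\mathcal{M}$ is not strongly algebraic. The main obstacle is the structural step (iii) and the attendant identification $\Ann_{\widehat{R}}(U_i)=\fq_i$ with a minimal prime of $\widehat{R}$; this is where the isolated singularity hypothesis really enters, via analytic normality at each $\fn_i$. Once that compatibility is set up, the rest of the argument is a short exercise with annihilators and heights under flat base change.
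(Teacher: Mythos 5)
The first half is fine, and your overall strategy for the second half (exhibit a summand $U_i$ whose $\widehat{R}$-annihilator is nonzero but contracts to $0$ in $R$, then contradict $\Ann_{\widehat R}(\widehat N)=\Ann_R(N)\widehat R$) is a legitimate route. The gap is in the structural inputs (i)--(iii) that you yourself flag as the crux. Item (i) is not Nagata's theorem: Nagata/Rees prove that \emph{analytically unramified} implies finite integral closure, and the converse you need is false in general (it holds in dimension one by Krull's theorem, but a normal Noetherian local domain --- which trivially has finite normalization --- need not be analytically unramified, and the Observation assumes no excellence). Item (ii) is also not a theorem: every two-dimensional normal local domain has an isolated singularity in the sense used here, yet Nagata constructed two-dimensional normal Noetherian local domains that are analytically reducible, so ``normal with isolated singularity'' does not give analytic normality, and hence item (iii), the identification of $\prod_i U_i$ with the normalization of a reduced $\widehat{R}$ and of $\Ann_{\widehat R}(U_i)$ with a minimal prime, does not follow as stated. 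Since these are exactly the steps where you claim the isolated-singularity hypothesis ``really enters,'' the proof as written does not go through.

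The strategy is repairable, but by a different mechanism than the one you propose. Since $\overline{R}\otimes_R K=K$ for $K=\operatorname{Frac}(R)$, and every minimal prime $\fp$ of $\widehat{R}$ contracts to $(0)$ by going-down, one gets $\prod_i (U_i)_\fp\simeq \widehat{R}_\fp$, a local ring; hence at most one factor $U_i$ can be supported at any given minimal prime, so at most one $U_i$ is a faithful $\widehat{R}$-module. With $n\geq 2$ some $U_j$ has $\Ann_{\widehat R}(U_j)\neq 0$, while $R\hookrightarrow U_j$ forces $\Ann_{\widehat R}(U_j)\cap R=0$, and your annihilator contradiction then applies to that $U_j$ (which suffices for ``not strongly algebraic,'' though it yields ``all but at most one'' rather than ``every'' $U_i$). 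Notably this repaired argument uses no analytic normality --- and, curiously, no isolated-singularity hypothesis either. The paper's own proof is entirely different: it uses the isolated singularity to make $\overline{R}$ locally free on the punctured spectrum, so that $\Ext^1_R(\overline{R},-)$ is finite length; it then descends the hypothetical splitting to an $R$-module stable equivalence $\overline{R}\sim P\oplus Q$ and contradicts the indecomposability (rank one) of $\overline{R}$. You should either adopt the support/rank argument above or the paper's descent argument; as it stands your proof rests on two false general principles.
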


Let $R$ be a $1$-dimensional local ring with algebraically closed residue field such that $R_{\mathrm{red}}$ is analytically irreducible. 
Algebrization allows us to show that 
\(
\G_0 (R) \simeq \bigoplus_{\min(R)} \mathbb{Z}.
\)
For more details, see Corollary~\ref{cor4G}. 
One may refer to this as \emph{Herzog's conjecture}. 
In the domain case, this appears in \cite[Ex.~II.6.9]{weibel}. 
As another application, see Corollary~\ref{logor}.

A module $\mathcal{M}$ is called \emph{weakly algebraic} if it is a direct summand of an algebraic module. 
The following observation extends \cite[Proposition~4.3]{bea} by dropping the torsion-free assumption via a new argument.

\begin{observation}
	Let $R$ be analytically unramified and of dimension one. 
	Then every complete module is weakly algebraic. 
	Conversely, assume $R$ is essentially of finite type over a prime field and every complete $\widehat{R}$-module is weakly algebraic. 
	Then $\dim R \leq 1$.
\end{observation}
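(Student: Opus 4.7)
My plan is to treat the two directions separately. For the forward direction, I decompose a complete module into torsion and torsion-free pieces and glue them via an Ext lifting; for the converse, I construct an explicit cyclic complete module whose defining prime is not extended from $R$.

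For the first direction, let $\mathcal{M}$ be a complete module and set $T := \Gamma_{\fm}(\mathcal{M})$. Since $\dim \widehat{R} = 1$, $T$ has finite length; every finite-length module is automatically algebraic because $R/\fm^{n} = \widehat{R}/\fm^{n}\widehat{R}$ for all $n$. The quotient $F := \mathcal{M}/T$ is a torsion-free $\widehat{R}$-module, so by \cite[Proposition~4.3]{bea} there is a finitely generated $R$-module $N$ with $\widehat{N} = F \oplus F'$. Pulling back the sequence $0 \to T \to \mathcal{M} \to F \to 0$ along the projection $\widehat{N} \twoheadrightarrow F$ produces
\begin{equation*}
0 \longrightarrow T \longrightarrow \mathcal{M} \oplus F' \longrightarrow \widehat{N} \longrightarrow 0.
\end{equation*}
The key step is to lift this to an extension of $R$-modules $0 \to T \to N_{1} \to N \to 0$. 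Since $\Ext_{R}^{1}(N, T)$ is finitely generated and annihilated by a power of $\fm$ (as $T$ is), flat base change gives an isomorphism $\Ext_{R}^{1}(N, T) \simeq \Ext_{\widehat{R}}^{1}(\widehat{N}, T)$, making the lifting possible. Completing the lifted extension then yields $\widehat{N_{1}} \simeq \mathcal{M} \oplus F'$, so $\mathcal{M}$ is a summand of the algebraic module $\widehat{N_{1}}$.

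For the converse, assume $\dim R \geq 2$ and pick a minimal prime $\mathfrak{q}_{0}$ of $R$ with $\dim R/\mathfrak{q}_{0} \geq 2$, setting $R' := R/\mathfrak{q}_{0}$. Since $R$ is essentially of finite type over a prime field, $R'$ is an excellent local domain; thus $\widehat{R'} = \widehat{R}/\mathfrak{q}_{0}\widehat{R}$ is reduced and the formal fiber at $(0)$ is positive-dimensional (cardinality: $R'$ is countable while $\widehat{R'}$ has uncountably many height-one primes). This yields a prime $\mathfrak{p}' \in \Spec \widehat{R'}$ with $\mathfrak{p}' \cap R' = (0)$ that strictly contains a minimal prime of $\widehat{R'}$. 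Lift to $\mathfrak{p} \in \Spec \widehat{R}$ with $\mathfrak{p} \cap R = \mathfrak{q}_{0}$, and put $\mathcal{M} := \widehat{R}/\mathfrak{p}$. If $\mathcal{M}$ were a direct summand of some $\widehat{N}$, then $\mathfrak{p} \in \Ass_{\widehat{R}}(\widehat{N})$, and flat base change gives
\begin{equation*}
\Ass_{\widehat{R}}(\widehat{N}) = \bigcup_{\mathfrak{q} \in \Ass_{R}(N)} \Ass_{\widehat{R}}(\widehat{R}/\mathfrak{q}\widehat{R});
\end{equation*}
the minimality of $\mathfrak{q}_{0}$ in $R$ forces the contributing $\mathfrak{q}$ to equal $\mathfrak{q}_{0}$, and then reducedness of $\widehat{R'}$ forces $\mathfrak{p}$ to be a minimal prime of $\widehat{R'}$, contradicting the choice of $\mathfrak{p}'$.

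The main obstacle in the forward direction is \emph{gluing} the torsion and torsion-free summands, rather than treating them separately; the Ext-lifting works because $T$ is of finite length, which is precisely what can fail in higher dimension. In the converse, the delicate point is producing the non-minimal prime $\mathfrak{p}'$, which requires both excellence (to ensure reducedness of $\widehat{R'}$) and the positive-dimensionality of the generic formal fiber in dimension at least two.
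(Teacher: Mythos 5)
Your proof is correct, but both halves take routes genuinely different from the paper's. For the forward direction the paper does not pass through \cite[Proposition 4.3]{bea} at all: it observes that since $\widehat{R}$ is reduced and one-dimensional, every localization of $\widehat{R}$ at a non-maximal prime is a field, so \emph{any} complete module is locally free on the punctured spectrum, and then invokes Elkik's algebraization over the henselization $R^h$ together with descent from $R^h$ to $R$ up to direct summands (Fact \ref{corelkik}). That argument is self-contained and is precisely the ``new argument dropping the torsion-free assumption'' the introduction advertises; your argument instead takes the torsion-free case from \cite{bea} as a black box and supplies the d\'evissage $0\to\Gamma_{\fm}(\mathcal{M})\to\mathcal{M}\to F\to 0$ plus the Ext-lifting (the same lifting technique the paper uses in Remark \ref{idealto}(ii) and Proposition \ref{bm}), which is logically valid but makes the statement a formal consequence of \cite{bea} rather than an independent proof. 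Do note that torsion-freeness of $F=\mathcal{M}/\Gamma_{\fm}(\mathcal{M})$ uses that $\widehat{R}$ is reduced, since $\Ass(F)\subseteq\Min(\widehat{R})$ and the zero-divisors of a reduced ring are the union of its minimal primes; this is worth saying explicitly. For the converse the paper uses a pure counting argument: $R$ countable forces the class of weakly algebraic modules to be countable up to isomorphism, while countable prime avoidance gives uncountably many height-one primes $\mathcal{P}$ and hence uncountably many pairwise non-isomorphic modules $\widehat{R}/\mathcal{P}$. Your version localizes the counting to the generic formal fiber of $R/\fq_0$ and then verifies directly, via $\Ass_{\widehat{R}}(\widehat{N})=\bigcup_{\fq\in\Ass_R(N)}\Ass_{\widehat{R}}(\widehat{R}/\fq\widehat{R})$ and reducedness of $(R/\fq_0)^{\widehat{}}$ (which needs excellence, available here), that the specific module $\widehat{R}/\fp$ is not a summand of any extended module. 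This buys an explicit witness and avoids having to argue that weak algebraicity is preserved under the countability count (which in the paper implicitly uses Krull--Remak--Schmidt over $\widehat{R}$ to bound the summands of each algebraic module), at the cost of importing formal-fiber machinery. Both directions of your argument stand.
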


In Proposition~\ref{bm} and Corollary~\ref{bm1}, we present a connection from Observation~1.2 to the derived category of modules. 
In certain low-dimensional cases, there are abundant examples of strongly algebraic modules. 
For instance, we show the following.

\begin{observation}
	Let $R$ be a $d$-dimensional local ring and $\mathcal{M}$ a complete module of finite projective dimension. 
	Suppose one of the following holds:
	\begin{itemize}
		\item[(i)] $d=1$;
		\item[(ii)] $d=2$ and $\mathcal{M}$ is torsionless;
		\item[(iii)] $d=3$ and $\mathcal{M}$ is reflexive.
	\end{itemize}
	Then $\mathcal{M}$ is strongly algebraic.
\end{observation}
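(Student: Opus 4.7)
The strategy is to reduce all three cases to algebrizing a complete module of projective dimension at most one. By the Auslander--Buchsbaum formula $\pd_{\widehat R}\mathcal M = \depth \widehat R - \depth_{\widehat R}\mathcal M$, so in case (i) the bound $\depth\widehat R\le 1$ gives $\pd_{\widehat R}\mathcal M\le 1$ at once. In case (ii) a torsion-less $\mathcal M$ embeds in some free $\widehat R^{k}$, and the depth lemma applied to $0\to \mathcal M\to \widehat R^{k}\to \widehat R^{k}/\mathcal M\to 0$ forces $\depth_{\widehat R}\mathcal M\ge \min\{1,\depth\widehat R\}$; together with $\depth\widehat R\le 2$ this yields $\pd_{\widehat R}\mathcal M\le 1$. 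In case (iii), reflexivity of $\mathcal M$ exhibits it as the kernel of a map between free modules (a second syzygy of $\tr\mathcal M$), and applying the depth lemma twice gives $\depth_{\widehat R}\mathcal M\ge \min\{2,\depth\widehat R\}$, so again $\pd_{\widehat R}\mathcal M\le 1$.

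Because the class of modules of projective dimension at most one is closed under direct summands, in all three cases it is enough to prove the following, which upgrades ``algebraic'' to ``strongly algebraic'' once it is applied summand-by-summand: any complete module $\mathcal N$ with $\pd_{\widehat R}\mathcal N\le 1$ is algebraic. For this, the plan is to take a presentation $0\to \widehat R^{m}\xrightarrow{A}\widehat R^{n}\to \mathcal N\to 0$, approximate $A$ entrywise by a matrix $A'$ over $R$ satisfying $A-A'\in \fm^{N}$ in each entry for $N\gg 0$, and set $M:=\coker(A':R^{m}\to R^{n})$. Flatness of $R\to\widehat R$ identifies $\widehat M$ with $\coker(A'\otimes_{R}\widehat R)$.

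Algebraicity of $\mathcal N$ then reduces to the rigidity statement that, for $N\gg 0$, an $\fm^{N}$-perturbation of the matrix $A$ yields an isomorphic cokernel; this is the main obstacle I anticipate. The plan is to establish it by a Newton-type iteration constructing invertible $P\in \mathrm{Id}_{n}+\fm^{N}$ and $Q\in \mathrm{Id}_{m}+\fm^{N}$ with $PA'Q=A$. The linearised step amounts to solving $pA-Aq\equiv A-A'$ modulo higher powers of $\fm$; the obstruction lies in a finitely generated $\Ext^{1}$-type $\widehat R$-module (finite because $\pd_{\widehat R}\mathcal N\le 1$), and can be absorbed at successive stages by enlarging $N$ via the Artin--Rees lemma. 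Completeness of $\widehat R$ then lets the iteration converge to $P,Q$, producing $\mathcal N\simeq \widehat M$ and finishing the proof.
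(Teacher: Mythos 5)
Your reduction of all three cases to $\pd_{\widehat R}\mathcal M\le 1$ (via Auslander--Buchsbaum and the depth lemma applied to the syzygy exact sequences coming from torsion-lessness, resp.\ reflexivity) is correct and parallels what the paper does. The gap is in the second half: the statement you reduce to --- \emph{every} complete module $\mathcal N$ with $\pd_{\widehat R}\mathcal N\le 1$ is algebraic --- is false, and the paper itself exhibits the counterexample. Over $R=\mathbb{Q}[X,Y]_{(X,Y)}$ there is a height-one prime $\fp\lhd\widehat R$ (necessarily principal, since $\widehat R$ is a $2$-dimensional regular UFD) such that $\mathcal M:=\widehat R/\fp$ is not algebraic, even though $\pd_{\widehat R}(\mathcal M)=1$. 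Concretely, if $\fp=(f)$ with $f$ a power series ``transcendental'' over $R$, then truncating $f$ modulo $\fm^N$ does \emph{not} return an isomorphic cokernel for any $N$: your rigidity claim fails because the obstruction module controlling the Newton iteration is $\Ext^1_{\widehat R}(\mathcal N,\widehat R)$-like and is \emph{not} annihilated by a power of $\fm$ in general, so the Artin--Rees absorption never terminates and the iteration does not converge.

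What is missing is precisely the extra hypothesis that makes descent work: one needs $\Ext^{i}_{\widehat R}(\mathcal M,\widehat R)$ to have finite length for all $i>0$. This is what the hypotheses (i)--(iii) actually buy, via the bound $\dim\Ext^{i}_{\widehat R}(N,\widehat R)\le d-i$ for modules of finite projective dimension (Grothendieck, SGA~2): since $\mathcal M$ is a $(d-1)$-syzygy of some $\mathcal N$, one has $\Ext^{1}_{\widehat R}(\mathcal M,\widehat R)\simeq\Ext^{d}_{\widehat R}(\mathcal N,\widehat R)$, which therefore has dimension $\le 0$. With that finite-length condition in hand, the correct engine is the Auslander--Bridger/Peskine--Szpiro descent theorem (Bridger's thesis, Prop.~5.44): there exist $n$ and a finitely generated $R$-module $N$ with $\mathcal M\oplus\widehat R^{\,n}\simeq\widehat N$, and the ``2 of 3'' principle then gives that $\mathcal M$ is algebraic; applying this to each direct summand gives strong algebraicity. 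So your outline needs to be repaired by (a) recording that the syzygy structure forces $\Ext^{+}_{\widehat R}(\mathcal M,\widehat R)$ to have finite length, and (b) replacing the matrix-perturbation argument by a descent result that genuinely uses this finiteness (your iteration could be salvaged only under that same finiteness hypothesis, at which point it is essentially reproving the cited theorem).
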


Despite the restriction on projective dimension, part (i) may be compared with a result of Levy and Odenthal~\cite{levy}, who worked over $1$-dimensional analytically unramified rings. 
Similarly, part (ii) may be compared with a result of Weston~\cite{two}, who worked over $2$-dimensional analytically normal domains. 
In Theorem~\ref{Aus}, we reformulate and unify an algebrization method due to Horrocks, Auslander--Bridger, and Peskine--Szpiro. 
We then present some of its applications, such as the algebrization process for certain generalized Cohen--Macaulay modules.
These results has an application. In particular we extend a   recent result of Puthenpurakal \cite[Theorem 1.1]{tony}:

\begin{corollary} Let $(A,\fm)$ be an excellent Gorenstein isolated singularity of dimension
	$d \geq 2$. Let $f : G(A) \to G(\widehat{A})$ be the natural map. The following are equivalent: \begin{enumerate}
		\item[(i)]   $f_\mathbb{Q}$ is an isomorphism.
		
		\item[(ii)]  For any maximal Cohen-Macaulay (abb. MCM) $\widehat{A}$-module $M$ there exists an MCM $A$-module $N$ and integers
		$r \geq 1$ and $s\geq  0$  such that ${M}^r \oplus\widehat{A}^s
		\cong \widehat{N}.$
		\item[(iii)]   For any locally free over punctured spectrum  $\widehat{A}$-module $M$ there exists an  $A$-module $N$ and integers
		$r \geq 1$ and $s\geq  0$  such that ${M}^r \oplus\widehat{A}^s
		\cong \widehat{N}.$
	\end{enumerate}
\end{corollary}
Similarly, one can extend \cite[Theorem 1.4]{tony} by working with the natural morphism $\theta:\underline{CM}({A})\to \underline{CM}({\widehat{A}})$, instead of $f$,
where $ \underline{CM}({A})$ is the stable category of maximal Cohen-Macaulay $A$-modules.
Section~4 collects several remarks on the algebrization of certain formal regular functions. 
Let $\mathcal{X} := \Spec(R) \setminus \{\fm\}$ and let $(\widehat{\mathcal{X}}, \mathcal{O}_{\widehat{\mathcal{X}}})$ denote the formal completion of $\mathcal{X}$ along $\mathcal{Y} := \V(\fa) \setminus \{\fm\}$. 
Following Hironaka and Matsumura~\cite{hiro}, the subscheme $\mathcal{Y}$ is called $\Gg$ in $\mathcal{X}$ provided
\[
\HH^0(\mathcal{X}, \mathcal{O}_{\mathcal{X}}) \stackrel{\simeq}{\longrightarrow} \HH^0(\widehat{\mathcal{X}}, \mathcal{O}_{\widehat{\mathcal{X}}}).
\]
We present situations for which $\mathcal{Y}$ is (or is not) $\Gg$ in $\mathcal{X}$. 
For example, over a complete Cohen--Macaulay local ring of dimension $d>1$, we show that $\mathcal{Y}$ is $\Gg$ in $\mathcal{X}$ if and only if $\cd(\fa) \leq d-2$. 
Recall from \cite[Corollary~4.2]{AD} that a nonzero module $M$ is Cohen--Macaulay if and only if the following formula is valid
\[
\fgrade(\fb, M) + \cd(\fb, M) = \dim(M) \quad \forall\, \fb \lhd R. \qquad (\ast)
\]
Here, $\fgrade(\fb,-)$ denotes the \emph{formal grade}. 
It was conjectured in \cite[Page~1098]{AD} that the Cohen--Macaulay condition can be replaced by a weaker one under suitable restrictions on $(\ast)$. 
We confirm that prediction by showing the following.

\begin{corollary}
	Let $R$ be a local domain of dimension $d \geq 3$, and let $M$ be a torsion-free module satisfying Serre's $\Se_r$ condition. 
	Suppose $R$ is complete with respect to the $\fa$-adic topology and is a homomorphic image of a Gorenstein ring. 
	If $\cd(\fa, M) \leq \dim M - r$, then $\fgrade(\fa, M) \geq r$.
\end{corollary}

It is worth noting that the special case $r=2$ of the last observation extends \cite[Corollary~2]{fal2}, where Faltings worked under the assumption $\mu(\fa) \leq d-2$. 
In his setting, $R$ is a homomorphic image of a regular ring. 
This, in turn, implies that
\(
\HH^0(\widehat{\mathcal{X}}, \widehat{\mathcal{F}}) \simeq \HH^0(\mathcal{X}, \mathcal{F}),
\)
where $\mathcal{F}$ is the sheaf associated to $M$ over $\mathcal{X}$ and $\widehat{\mathcal{F}}$ is its formal completion along $\mathcal{Y}$. 
Finally, we provide an elementary proof of a remarkable result of Bhatt and de~Jong; see Proposition~\ref{bhde}. Despite its simplicity, we obtain \ref{bhde} independent of them.

\section{ Algebraic modules}

The notation $\Mod(-)$ stands for the category of finitely generated modules.
Free modules are algebraic, and the class of finite-length modules is also algebraic.
More generally, we have the following.

\begin{discussion}\label{art}
	\begin{enumerate}
		\item[(i)] 
		Let $\mathcal{M}$ be a complete module. Suppose that $\mathcal{M}$ is finitely generated as an $R$-module. 
		Then $\mathcal{M}$ is strongly algebraic. Indeed, let $\mathcal{N}$ be a direct summand of $\mathcal{M}$. 
		Note that $\mathcal{N}$ is finitely generated as an $R$-module. 
		By \cite[Theorem~1.8]{sean}, we have $\mathcal{N} \simeq \mathcal{N} \otimes_R \widehat{R}$, and hence $\mathcal{N}$ is algebraic. 
		In particular, any finite-length $\widehat{R}$-module is algebraic.
		
		\item[(ii)] 
		Let $A$ be an Artinian $R$-module. 
		It is well known that $A$ can be equipped with the structure of an $\widehat{R}$-module in such a way that the original $R$-module structure is recovered from the canonical map $R \to \widehat{R}$. 
		In this regard, $A \simeq A \otimes_R \widehat{R}$.
		
		\item[(iii)] 
		Let $(\widehat{R}, \fm_{\widehat{R}})$ be any complete local ring, and let $\fa$ be an ideal primary to its maximal ideal. 
		Then $\fa$ is algebraic. 
		Indeed, consider the short exact sequence
		\(
		0 \longrightarrow \fa \longrightarrow \widehat{R} \longrightarrow \frac{\widehat{R}}{\fa} \longrightarrow 0.
		\)
		Since $\widehat{R}/\fa$ has finite length, it is extended from an $R$-module. 
		Moreover, $\Hom_{\widehat{R}}(\widehat{R}, \widehat{R}/\fa) \simeq \widehat{R}/\fa$ has finite length both as an $\widehat{R}$-module and as an $R$-module. 
		In particular, $\Hom_{\widehat{R}}(\widehat{R}, \widehat{R}/\fa)$ is finitely generated as an $R$-module. 
		By \cite[Proposition~3.2(ii)]{sean}, it follows that $\fa$ is algebraic.
	\end{enumerate}
\end{discussion}

\begin{remark}\label{idealto}
	Suppose every ideal of a complete ring is algebraic. 
	Is every complete module algebraic?
	\begin{enumerate}
		\item[(i)] The answer is negative, even for $2$-dimensional regular rings.
		\item[(ii)] Let $\mathcal{R}$ be a $2$-dimensional complete normal local domain. 
		If every ideal of $\mathcal{R}$ is algebraic, then every torsion-free module over $\mathcal{R}$ is algebraic.
	\end{enumerate}
\end{remark}

\begin{proof}
	(i) To see a counterexample, consider $R := \mathbb{Q}[X,Y]_{(X,Y)}$. 
	It is shown in \cite[Example~3.6]{sean} that there exist non-algebraic modules over $\widehat{R}$. 
	We now check that every ideal of $\widehat{R}$ is algebraic. 
	Since flatness behaves well with respect to intersections, the intersection of any algebraic collection of ideals is algebraic. 
	Using this and primary decomposition, it suffices to treat the case of primary ideals. 
	By Discussion~\ref{art}(iii), we may further assume that the primary ideal has height one. 
	Recall that height-one primary ideals in a UFD are principal. 
	Finally, note that principal ideals over an integral domain are free, and that free modules are algebraic 
	(for a modern argument, see Fact~A in Example~\ref{fail} below).
	
	(ii) Let $\mathcal{M}$ be a torsion-free $\mathcal{R}$-module. 
	Then $\mathcal{M}$ contains a free submodule $\mathcal{F}$ such that $\mathcal{M}/\mathcal{F}$ is an ideal $\mathcal{I}$ of $\mathcal{R}$. 
	Consider the short exact sequence 
	\(
	0 \longrightarrow \mathcal{I} \longrightarrow \mathcal{R} \longrightarrow \mathcal{R}/\mathcal{I} \longrightarrow 0,
	\)
	which gives rise to
	\(
	\Ext^1_{\mathcal{R}}(\mathcal{I}, \mathcal{R}) \simeq \Ext^2_{\mathcal{R}}(\mathcal{R}/\mathcal{I}, \mathcal{R}).
	\)
	Since $\operatorname{gldim}(\mathcal{R}_{\fp}) = 1$ for all $\fp \in \Spec(\mathcal{R}) \setminus \{\fm_{\mathcal{R}}\}$, it follows that 
	$\Supp(\Ext^2_{\mathcal{R}}(\mathcal{R}/\mathcal{I}, \mathcal{R})) \subseteq \{\fm_{\mathcal{R}}\}$. 
	Hence $\Ext^1_{\mathcal{R}}(\mathcal{I}, \mathcal{R})$ is of finite length. 
	Let $R$ be such that $\mathcal{R} = \widehat{R}$. 
	Then there exists a short exact sequence
	\(
	\mathcal{D} : 0 \longrightarrow \mathcal{F} \longrightarrow \mathcal{M} \longrightarrow \mathcal{I} \longrightarrow 0.
	\)
	Thus $\mathcal{D} \in \Ext^1_{\widehat{R}}(\mathcal{I}, \mathcal{F})$. 
	Free modules are algebraic, i.e., there exists a free $R$-module $F$ such that $\widehat{F} \simeq \mathcal{F}$. 
	By assumption, $\mathcal{I} \simeq \widehat{I}$. 
	Since $\Ext^1_{\widehat{R}}(\mathcal{I}, \mathcal{F}) \simeq \Ext^1_{R}(I, F) \otimes_R \widehat{R}$, 
	we have that $\Ext^1_{R}(I, F)$ is of finite length as an $R$-module. 
	By Discussion~\ref{art}, 
	\[
	\mathcal{D} \in \Ext^1_{\widehat{R}}(\mathcal{I}, \mathcal{F}) 
	\simeq \Ext^1_{R}(I, F) \otimes_R \widehat{R} 
	\simeq \Ext^1_{R}(I, F).
	\]
	By the Yoneda interpretation of $\Ext^1$, there exists an exact sequence 
	\( 
	D : 0 \longrightarrow F \longrightarrow M \longrightarrow I \longrightarrow 0
	\)
	such that $\mathcal{D} \simeq D \otimes_R \widehat{R}$. 
	By the $5$-lemma, it follows that $\mathcal{M} \simeq \widehat{M}$.
\end{proof}

\begin{example}\label{fail}
Let $\mathcal{R}$ be a two-dimensional complete normal local domain such that  $\Cl(\mathcal{R})\neq 0$. There is a torsion-free algebraic module $\mathcal{M}$ with  a direct summand $\mathcal{N}$ such that
$\mathcal{N}$ is not algebraic.
\end{example}

\begin{proof}
Any complete  local domain of depth at least two can be realize as a completion of a unique factorization domain. This is in \cite{heit}.
Let $R$ be a $\UFD$ such that its completion is $\mathcal{R}$.
Let $I$ be such that $[I]\in\Cl(\widehat{R})$ is nonzero and put $J$ be such that $[J]=[I]^{-1}$. We look at the $R$-modules
$\mathcal{M}:=I\oplus J$ and $\mathcal{N}:=I$.  Note that $0=[\mathcal{M}]\in\im(\Cl(R)\to\Cl(\widehat{R}))$
 and $0\neq[\mathcal{N}]\notin\im(\Cl(R)\to\Cl(\widehat{R}))$. We bring the following result of Weston (see \cite[Proposition 2.15]{lw}):
  \begin{enumerate}
\item[Fact ] A): Let $L$ be a torsion-free and complete $\widehat{R}$-module. Then $L$ is algebraic
if and only if $[L]\in\im(\Cl(R)\to\Cl(\widehat{R}))$.
\end{enumerate} Since $R$ is $\UFD$, its classical group is zero. Thus, $\im(\Cl(R)\to\Cl(\widehat{R}))=0$.
We are going to apply Fact A) to deduce that  $\mathcal{M}$ is algebraic and that its direct summand $\mathcal{N}$ is not algebraic.
\end{proof}

\begin{discussion}\label{2 of 3}There is a useful principle (see \cite[Proposition 3.1]{sean}):
Let $\mathcal{M}$, and $\mathcal{N}$ be complete modules. If two of $\{\mathcal{M},\mathcal{N},\mathcal{M}\oplus \mathcal{N}\}$ are algebraic, then  so the third.
We call this the principle of 2 of 3.
 \end{discussion}

\begin{proposition}\label{a23}
Let $R$ be a   local domain whose integral closure $\overline{R}$ is a finite
$R$-module  but $\overline{R}$  is not local.   If $R$ is of isolated singularity, then $(\overline{R})^{\widehat{}}$ is  algebraic  but not strongly algebraic.
\end{proposition}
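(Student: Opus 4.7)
The plan is to dispatch the two assertions separately. For the \emph{algebraic} part, since $\overline R$ is finitely generated over $R$ by hypothesis, its $\fm$-adic completion is $(\overline R)^{\widehat{}}=\overline R\otimes_R\widehat R$, extended by definition from the $R$-module $\overline R$.

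For the failure of strong algebraicness, I would exhibit an explicit non-algebraic direct summand. Since $\overline R$ is finite over the local ring $R$ it is semi-local, with maximal ideals $\fm_1,\ldots,\fm_n$, $n\geq 2$; because $\fm\overline R$ is primary to $\fm_1\cap\cdots\cap\fm_n$ the $\fm$-adic topology coincides with the Jacobson-radical topology on $\overline R$, yielding a ring decomposition
$$\widehat{\overline R}\;\simeq\;\prod_{i=1}^n R_i,\qquad R_i:=\widehat{\overline{R}_{\fm_i}},$$
each $R_i$ a $\widehat R$-module direct summand. I plan to reach a contradiction from assuming every $R_i$ algebraic, $R_i\simeq\widehat{M_i}$ for finitely generated $R$-modules $M_i$. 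By faithful flatness and finite presentation, $\Ann_R(M_i)=\Ann_{\widehat R}(R_i)\cap R=\ker(R\to R_i)$, and the chain
$$R\hookrightarrow\overline R\hookrightarrow\overline{R}_{\fm_i}\hookrightarrow R_i$$
is injective (respectively: domain into integral extension; localization of a domain; completion of a Noetherian local ring by Krull's intersection theorem). So $\Ann_R(M_i)=0$; since $M_i$ is a nonzero finitely generated module over the domain $R$ with vanishing annihilator, $\rank_R(M_i)\geq 1$.

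To obtain the contradiction I compute the ``generic rank'' of $\widehat{\overline R}$ at a minimal prime $\fq$ of $\widehat R$ in two ways. Faithful flatness of $\widehat R$ over $R$ together with $R$ being a domain force $\fq\cap R=0$, so $\kappa(\fq)=\operatorname{Frac}(\widehat R/\fq)$ is an extension of $\operatorname{Frac}(R)$; using $\widehat{\overline R}=\overline R\otimes_R\widehat R$ together with $\overline R\otimes_R\operatorname{Frac}(R)=\operatorname{Frac}(R)$ (because $\overline R$ shares the fraction field of $R$),
$$\widehat{\overline R}\otimes_{\widehat R}\kappa(\fq)\;=\;\overline R\otimes_R\kappa(\fq)\;=\;\operatorname{Frac}(R)\otimes_{\operatorname{Frac}(R)}\kappa(\fq)\;=\;\kappa(\fq),$$
of $\kappa(\fq)$-dimension $1$. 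On the other hand the product decomposition and the identification $R_i\otimes_{\widehat R}\kappa(\fq)=M_i\otimes_R\kappa(\fq)$ give the same tensor product as $\bigoplus_{i=1}^n M_i\otimes_R\kappa(\fq)$, of $\kappa(\fq)$-dimension $\sum_i\rank_R(M_i)\geq n\geq 2$, the desired contradiction.

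The step likely needing most care is the annihilator descent $\Ann_{\widehat R}(\widehat M)\cap R=\Ann_R(M)$ (which uses finite presentation plus faithful flatness) and the compatible identity $\overline R\otimes_R\operatorname{Frac}(R)=\operatorname{Frac}(R)$. The core geometric content—that $\overline R$ has generic rank $1$ over $R$ while any non-trivial product decomposition of $\widehat{\overline R}$ contributes at least $n\geq 2$ in total generic rank at each minimal prime of $\widehat R$—is the essential obstruction to strong algebraicness.
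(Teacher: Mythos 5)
Your proof is correct, but it takes a genuinely different route from the paper's. Both arguments begin with the same decomposition $(\overline{R})^{\widehat{}}\simeq\bigoplus_{i}(\overline{R}_{\fm_i})^{\widehat{}}$, $n\geq 2$. From there the paper assumes a single summand is algebraic, invokes the ``2 of 3'' principle to write $(\overline{R})^{\widehat{}}\simeq\widehat{P}\oplus\widehat{Q}$, uses the isolated-singularity hypothesis to make $\Ext^1_R(\overline{R},-)$ finite-length-valued so that the $\Ext$-modules descend along $R\to\widehat{R}$, and then appeals to stable equivalence of coherent functors to force $\overline{R}\simeq P\oplus Q$, contradicting the fact that a rank-one torsion-free module over a domain is indecomposable. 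You instead assume all summands algebraic and compare generic ranks across the faithfully flat map $R\to\widehat{R}$: the injectivity of $R\to R_i$ forces $\Ann_R(M_i)=0$, hence $\rank_R(M_i)\geq 1$, while $\overline{R}\otimes_R\operatorname{Frac}(R)=\operatorname{Frac}(R)$ pins the total rank at $1$, giving $1\geq n\geq 2$. This is the same rank-one obstruction the paper uses at the very last step, but applied directly at the generic fibre rather than after descending the entire module. What your approach buys is significant: it is elementary (no coherent functors, no stable equivalence, no $\Ext$-descent) and it never uses the isolated-singularity hypothesis, so it actually proves the stronger statement that the conclusion holds for any local domain with finite non-local integral closure. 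The only thing the paper's method yields that yours, as written, does not is the sharper assertion that \emph{no individual} summand $(\overline{R}_{\fm_i})^{\widehat{}}$ is algebraic; but even that follows from your rank count once combined with the ``2 of 3'' principle, since the complementary summand would then also be extended from a module of positive rank.
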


\begin{proof}
 There are only finitely many  prime ideals $\{\fp_1,\ldots,\fp_n\}\subseteq\Spec(\overline{R})$ lying over $\fm$, because $\overline{R}$ is finitely generated over
$R$ as a module. Since $\overline{R}$  is not local, $n>1$.
Recall that $\overline{R}$  is indecomposable as an $R$-module but $(\overline{R})^{\widehat{}}\simeq(\overline{R}_{\fp_1})^{\widehat{}}\oplus\ldots\oplus(\overline{R}_{\fp_n})^{\widehat{}}$ is decomposable as an $\widehat{R}$-module. Suppose on the contrary that $(\overline{R}_{\fp_{i_0}})^{\widehat{}}$ is algebraic for some $i_0$.
By  the principle of 2 of 3 (see Discussion \ref{2 of 3}), $\bigoplus_{i\neq i_0} (\overline{R}_{\fp_i})^{\widehat{}}$ is algebraic. Let $P$ and $Q$ be finitely generated $R$-modules such
that  $\widehat{P}\simeq(\overline{R}_{\fp_{i_0}})^{\widehat{}}$  and $\widehat{Q}\simeq(\bigoplus_{i\neq i_0}\overline{R}_{\fp_{i}})^{\widehat{}}$. Thus, $(Q\oplus P)^{\widehat{}}\simeq (\overline{R})^{\widehat{}}$. Let $\fq\in\Spec(R)\setminus\{\fm\}$. Recall that
$\overline{(R_{\fq})}=(\overline{R})_{\fq}$. Regular rings are normal. Since $R$ is of isolated singularity we deduce that $R_{\fq}=\overline{(R_{\fq})}=(\overline{R})_{\fq}$.
Conclude by this that $\overline{R}$ is locally free on the punctured spectrum.
Since $\overline{R}$ is locally free, $\Ext^1_R(\overline{R},-):\mod(R)\to\mod(R)$ is of finite length. Let $L\in\mod(R)$. Due to Discussion \ref{art}  $\Ext^1_R(\overline{R},L)\simeq\Ext^1_R(\overline{R},L)\otimes_R\widehat{R}$.
 We have
\[\begin{array}{ll}
\Ext^1_R(\overline{R},L)&\simeq\Ext^1_R(\overline{R},L)\otimes_R\widehat{R}\\
&\simeq
\Ext^1_{\widehat{R}}((\overline{R})^{\widehat{}},L\otimes_R\widehat{R}) \\
&\simeq\Ext^1_{\widehat{R}}((Q\oplus P)^{\widehat{}},L\otimes_R\widehat{R})\\
&\simeq\Ext^1_{\widehat{R}}(Q\otimes_R\widehat{R},L\otimes_R\widehat{R})\oplus\Ext^1_{\widehat{R}}(P\otimes_R\widehat{R},L\otimes_R\widehat{R}).
\end{array}\]
From this we see $\Ext^1_{\widehat{R}}(\widehat{Q},\widehat{L})\simeq\Ext^1_{R}(Q,L)\otimes_R\widehat{R}$ is finite length as an $\widehat{R}$-module.
Conclude by this that $\Ext^1_{R}(Q,L)$ is finite length as an $R$-module.
In view of Discussion \ref{art}  $$\Ext^1_{\widehat{R}}(\widehat{Q},\widehat{L})\simeq\Ext^1_{R}(Q,L)\otimes_R\widehat{R}\simeq\Ext^1_{R}(Q,L).$$ Similarly,
$\Ext^1_{\widehat{R}}(\widehat{P},\widehat{L})\simeq\Ext^1_{R}(P,L)\otimes_R\widehat{R}\simeq\Ext^1_{R}(P,L).$
 Combine these together to see
$$\Ext^1_R(\overline{R},L)\simeq\Ext^1_{R}(Q,L)\oplus\Ext^1_{R}(P,L)\quad(+)$$
 Now, we borrow some lines from Hartshorne's coherent functors \cite{har}. Note that $(+)$  implies that $\overline{R}$ and $P\oplus Q$  are stably equivalent  modules. Projective modules are free over local rings. By splitting off free modules, every
stable equivalence class has a unique smallest element containing no free
direct summands. Since $\overline{R}$ is  minimal, we   assume that $P$ and $Q$ are also minimal. Thus $\overline{R}\simeq P\oplus Q$. To see a contradiction recall that $\overline{R}$ is of rank one. Due to this contradiction, we observe that $(\overline{R})^{\widehat{}}$ is   not strongly algebraic.
\end{proof}

Here, we  present two applications of algebrization.
Recall that
Ferrand and Raynaud constructed  1-dimensional  Cohen-Macaulay local rings that are not  homomorphic image of  Gorenstein local  rings.

\begin{corollary}\label{logor}
Let $\widehat{R}$ be any 1-dimensional  Cohen-Macaulay local ring.  Suppose
 $\widehat{R}$ is generically Gorenstein. Then $R$ is a homomorphic image of a Gorenstein local ring.
\end{corollary}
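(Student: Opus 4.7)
The plan is to algebrize a canonical module of $\widehat{R}$ to obtain one for $R$, and then invoke Reiten's theorem that a Cohen-Macaulay local ring admitting a canonical module is a homomorphic image of a Gorenstein local ring. First, since $\widehat{R}$ is complete, Cohen's structure theorem realizes it as a quotient of a regular local ring, so $\widehat{R}$ admits a canonical module $\omega$. Being Cohen-Macaulay of dimension one and generically Gorenstein, $\omega$ is torsion-free of generic rank one, hence embeds into the total quotient ring of $\widehat{R}$ and, after clearing denominators, as an ideal $\mathcal{I}\subseteq \widehat{R}$ containing a nonzerodivisor. In a one-dimensional local ring any ideal containing a nonzerodivisor is primary to the maximal ideal, so $\mathcal{I}$ is $\fm_{\widehat{R}}$-primary.

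Second, Discussion~\ref{art}(iii) applies directly to $\mathcal{I}$ and produces an ideal $I\subseteq R$ together with an isomorphism $\widehat{I}\simeq \mathcal{I}\simeq \omega$. I then check that $I$ is itself a canonical module for $R$: faithful flatness of $R\to\widehat{R}$ together with $\widehat{I}\simeq I\otimes_R\widehat{R}$ preserves the Cohen-Macaulay property, the dimension, and gives $\mu^i_R(\fm,I)=\mu^i_{\widehat{R}}(\fm_{\widehat{R}},\widehat{I})$ for every $i$; these data characterize a canonical module, so $I$ is canonical for $R$.

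Finally, Reiten's theorem yields that $R$ is a homomorphic image of a Gorenstein local ring, as desired. The main obstacle in the argument is the first step: the generically Gorenstein hypothesis is used precisely to guarantee that $\omega$ has generic rank one, for otherwise one could not realize $\omega$ as an ideal of $\widehat{R}$ containing a nonzerodivisor, and Discussion~\ref{art}(iii) would not be available. The dimension one hypothesis is then what forces such an ideal to be automatically $\fm_{\widehat{R}}$-primary, which is exactly the form of ideal needed to invoke the algebrization statement.
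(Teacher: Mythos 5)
Your proposal is correct and follows essentially the same route as the paper: both realize the canonical module of $\widehat{R}$ as an $\fm_{\widehat{R}}$-primary ideal using the generically Gorenstein and one-dimensional hypotheses, algebrize it via Discussion~\ref{art}(iii), verify the descended module is a canonical module for $R$ (your Bass-number check is the same as the paper's type-one and finite-injective-dimension computation), and conclude by Reiten's theorem. The only cosmetic difference is that the paper separates the case $\omega_{\widehat{R}}\simeq\widehat{R}$, whereas you absorb it into the uniform ``ideal containing a nonzerodivisor'' argument.
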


\begin{proof}
 Complete  rings admit a canonical module.
Let $\omega_{\widehat{R}}$  be the canonical module of $\widehat{R}$.
Since
 $\widehat{R}$ is generically Gorenstein,  $\omega_{\widehat{R}}$  has a  rank.     From this, either $\omega_{\widehat{R}}$ is an ideal of height one   or $\omega_{\widehat{R}}=\widehat{R}$. The second possibility implies that $R$ is Gorenstein. Without loss of the generality we may assume that $\omega_{\widehat{R}}$ is an ideal of height one.
 Since the ring is 1-dimensional, $\omega_{\widehat{R}}$ is   primary to the maximal ideal. In view of Discussion \ref{art}(iii)
 $\omega_{\widehat{R}}$ is algebraic. Let $M$ be such that $\widehat{M}=\omega_{\widehat{R}}$.
It is easy to see that $M$ is maximal Cohen-Macaulay. In view of $(\frac{R}{\fm})^{\widehat{}}\simeq\Ext^1_{\widehat{R}}((\frac{R}{\fm})^{\widehat{}},\widehat{M})\simeq\Ext^1_{R}(R/\fm,M)^{\widehat{}}$, we see $\Ext^1_{R}(\frac{R}{\fm},M)\simeq\frac{R}{\fm}$, i.e.,  $M$ is of type $1$.
 Also, $\Ext^{>1}_{R}(R/\fm,M)^{\widehat{}}\simeq\Ext^{>1}_{\widehat{R}}((\frac{R}{\fm})^{\widehat{}},\widehat{M})=0$. So, $\Ext^{>1}_{R}(R/\fm,M)=0$, i.e.,
$M$ is  of finite injective dimension. From this, $R$ admits  a canonical module.
It turns out that $R$ is a homomorphic image of a Gorenstein local ring.
\end{proof}

Recall that $\G_0(R):=\frac{\bigoplus_{M\in \Mod(R)}\mathbb{Z}[M]}{([ A]+[C]-[ B]:0\to A\to B \to C\to  0\textit{ is exact })}.$

\begin{corollary} \label{cor4G}Let
$R$ be a $1$-dimensional local ring with algebraically closed residue field such that $R_{\red}$ is  analytically  reducible. Then $\G_0 (R)\simeq \bigoplus_{\min(R)}\mathbb{Z}$.
\end{corollary}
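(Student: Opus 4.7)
The plan is to build a split injection $\varphi\colon\bigoplus_{\fp\in\min(R)}\mathbb{Z}\to\G_0(R)$, $e_\fp\mapsto[R/\fp]$, and to reduce surjectivity to the vanishing $[R/\fm]=0$ in $\G_0(R)$. For each $\fp\in\min(R)$, the ring $R_\fp$ is artinian local, so $\G_0(R_\fp)\simeq\mathbb{Z}$, and the length map $[M]\mapsto\ell_{R_\fp}(M_\fp)$ sends $[R/\fp']\mapsto\delta_{\fp\fp'}$ and kills $[R/\fm]$. Assembling these retractions over $\fp\in\min(R)$ yields a left inverse for $\varphi$. Since $\dim R=1$ forces $\Spec(R)=\min(R)\cup\{\fm\}$, the group $\G_0(R)$ is generated by $\{[R/\fq]:\fq\in\Spec(R)\}$, so the surjectivity of $\varphi$ is equivalent to $[R/\fm]=0$. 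D\'evissage along $\nil(R)$ yields $\G_0(R)\simeq\G_0(R_{\red})$ compatibly with the bijection $\min(R)\leftrightarrow\min(R_{\red})$, so I may assume that $R$ is reduced.

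To produce the required vanishing I will work in the completion. Let $\fP$ be a minimal prime of $\widehat{R}$; then $A:=\widehat{R}/\fP$ is a one-dimensional complete local noetherian domain with algebraically closed residue field $k=R/\fm$. Complete local noetherian domains are Nagata, so the normalization $V$ of $A$ is a DVR, finite as an $A$-module, and its residue field is again $k$ by algebraic closedness. Fixing a uniformizer $\pi\in V$, the short exact sequence of $V$-modules (hence of $\widehat{R}$-modules)
\[
0\to V\xrightarrow{\pi}V\to V/\pi V\simeq k\to 0
\]
gives $[k]=0$ in $\G_0(\widehat{R})$. To finish I must descend this relation to $\G_0(R)$: I will construct a finitely generated $R$-module $N$ with $\widehat{N}\simeq V$ together with an $R$-linear injection $\phi\colon N\to N$ whose $\fm$-adic completion is multiplication by $\pi$. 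Given such $(N,\phi)$, the cokernel $N/\phi N$ has finite length and completes to $V/\pi V\simeq k$, whence $N/\phi N\simeq R/\fm$, and the resulting exact sequence $0\to N\xrightarrow{\phi}N\to R/\fm\to 0$ yields $[R/\fm]=0$ in $\G_0(R)$.

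The principal obstacle is the construction of $(N,\phi)$. My strategy is to exploit that each truncation $V/\pi^{n}V$ has finite length as an $R$-module and is therefore strongly algebraic by Discussion \ref{art}(i); the principle of 2 of 3 (Discussion \ref{2 of 3}) applied to $0\to\pi V/\pi^{n+1}V\to V/\pi^{n+1}V\to V/\pi V\to 0$ propagates algebraicity along the tower, and $N$ is produced as an appropriate limit of the compatible algebrizations---equivalently, in favorable cases, as the normalization $W$ of $R/(\fP\cap R)$ when that normalization is finite. The endomorphism $\phi$ is then extracted from the identification $\operatorname{End}_{\widehat{R}}(V)\simeq\operatorname{End}_R(N)\otimes_R\widehat{R}$: writing $\pi\cdot=\sum_i r_i\psi_i$ with $r_i\in\widehat{R}$ and $\psi_i\in\operatorname{End}_R(N)$, one approximates each $r_i$ by an element of $R$ modulo a sufficiently high power of $\fm$, and a 5-lemma argument in the style of Remark \ref{idealto}(ii) ensures that the resulting $R$-endomorphism has cokernel algebrizing to $k=R/\fm$.
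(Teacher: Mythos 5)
Your split injection $\bigoplus_{\fp\in\min(R)}\mathbb{Z}\to\G_0(R)$ via localized length functions, and the reduction of surjectivity to the single vanishing $[R/\fm]=0$, are fine (the retraction argument is, if anything, tidier than the paper's independence argument, which maps into $\G_0(\widehat{R})$ and analyzes supports of the modules $\widehat{R}/\widehat{\fp}$). The proof breaks down at the step you yourself call the principal obstacle: producing a finitely generated $R$-module $N$ with $\widehat{N}\simeq V$, where $V$ is the normalization of $\widehat{R}/\mathcal{P}$ for a minimal prime $\mathcal{P}$ of $\widehat{R}$. Such an $N$ does not exist in general, and the failure occurs precisely for rings satisfying the stated hypotheses: for $R_0=\mathbb{C}[x,y]_{(x,y)}/(y^2-x^3-x^2)$ one has $\widehat{R}_0\simeq\mathbb{C}[[u,v]]/(uv)$, and for $\mathcal{P}=(u)$ the branch $V=\widehat{R}_0/(u)\simeq\mathbb{C}[[v]]$ is already a DVR, yet it is not algebraic (see \cite[Example A.5]{kmv} and \cite[Page 335]{sean}, both invoked in this paper). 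None of the mechanisms you sketch gets around this: the truncations $V/\pi^nV$ have finite length and are therefore already algebraic, so the 2-of-3 principle applied along the tower yields nothing new, while their inverse limit is $V$ itself viewed as an $R$-module, which is not finitely generated over $R$; and the normalization $W$ of $R/(\mathcal{P}\cap R)$, even when module-finite, completes to the product of the normalizations of all analytic branches lying over $\mathcal{P}\cap R$, not to the single branch $V$ --- and the stated hypothesis that $R_{\red}$ is analytically reducible guarantees there is more than one branch. Consequently the pair $(N,\phi)$, and with it your proof of $[R/\fm]=0$, is not available; the subsequent $\operatorname{End}$-approximation and 5-lemma steps have nothing to act on.

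Two remarks on how the vanishing is actually obtained. First, you do not need to algebrize $V$: any finitely generated $R$-module with an injective endomorphism whose cokernel is $k$ suffices. When the normalization $W$ of $R/\fp$ ($\fp\in\min(R)$, $R$ reduced) is a finite $R$-module, $W$ is a semilocal PID whose residue fields equal $k$ (as $k$ is algebraically closed), and any $u\in W$ which is a uniformizer at one maximal ideal and a unit at the others gives $0\to W\xrightarrow{u}W\to R/\fm\to 0$, with no completion involved; but module-finiteness of the normalization is not among the hypotheses, so this is where analytic input would have to be spent. The paper takes a different route altogether: it quotes Herzog--Sanders \cite{hs} for the complete case, giving $\G_0(\widehat{R})\simeq\bigoplus_{\min(\widehat{R})}\mathbb{Z}$ and $[\widehat{R}/\widehat{\fm}]=0$, and then descends the vanishing along $\varphi\colon\G_0(R)\to\G_0(\widehat{R})$, whose injectivity is supplied by Kamoi--Kurano \cite{kk} under an isolated-singularity hypothesis. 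Your write-up neither proves nor cites any such injectivity, so even granting the complete-case vanishing $[k]=0$ in $\G_0(\widehat{R})$ (which you do establish correctly via the DVR $V$), nothing in your argument brings that relation back to $\G_0(R)$.
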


The following  argument may not be the best one:

\begin{proof}
In the case $R$ is complete and equi-characteristic this is in \cite[Proposition 2.2]{hs} (the proof works in the complete case).
Clearly, the residue field of $R$ coincides  with the residue field of $R_{\red}$.
Since $\G_0 (R)=\G_0 (R_{\red})$ and ${\min(R)}={\min(R}_{\red})$ we may assume that $R$ is reduced.
Clearly, the residue field of $R$ coincides  with the residue field of $\widehat{R}$.
 The assignment  $[M]\mapsto[M \otimes_R\widehat{R}]$ induces a morphism $\varphi:\G_0(R)\rightarrow \G_0(\widehat{R})$.
 For simplicity of the  reader we bring the following fact:
\begin{enumerate}
\item[Fact]  A): Let $A$ be a local ring such that  both of $A$ and $\widehat{A}$ are of isolated singularity. Then  $\varphi:\G_0(A)\rightarrow \G_0(\widehat{A})$ is injective, see \cite[Claim 5.1]{kk}.
\end{enumerate}
Both of $R$ and $\widehat{R}$ are of isolated singularity.
 We apply Fact A) along with
 \cite[Proposition 2.2]{hs} to see  $\varphi:\G_0(R)\hookrightarrow\G_0(\widehat{R})\simeq\bigoplus_{\min(\widehat{R})}\mathbb{Z}.$  In fact \cite[Proposition 2.2]{hs} shows that $[\frac{\widehat{R}}{\widehat{\fm}}]=0$. Since the map is injective, we have $[\frac{R}{\fm}]=0$. Any module has a filtration with prime factors. By prime filtration, $$\G_0 (R)=\langle[R/ \fp]:\fp\in\spec(R)\rangle=\langle[R/ \fp]:\fp\in\min(R)\rangle.$$
To finish the proof, we claim that $\{[R/ \fp]:\fp\in\min(R)\}$ are $\mathbb{Z}$-linearly independent. To this end, and in view of Claim A), we need to show  $\{\varphi([R/ \fp]):\fp\in\min(R)\}$ are $\mathbb{Z}$-linearly independent. Note that $\widehat{R}/ \widehat{\fp} $ has a filtration with prime factors. This prime factors belong to support of $\widehat{R}/ \widehat{\fp} $. On the other hand
$\Supp( \widehat{R}/ \widehat{\fp} )=\Ass_{\widehat{R}}(R/  \fp\otimes_R\widehat{R})\cup \{\widehat{\fm}\}$. Since
$[\frac{\widehat{R}}{\widehat{\fm}}]=0$ we see that $[\varphi(R/ \fp)]\in\langle[\frac{\widehat{R}}{ \mathcal{P}}]:\mathcal{P}\in\Ass(\frac{\widehat{R}}{ \widehat{\fp}})\rangle.$
In view of \cite[Theorem 23.2]{Mat}, $\Ass_{\widehat{R}}(R/  \fp\otimes_R\widehat{R})=\{\mathcal{P}\in\Spec(\widehat{R}):\mathcal{P}\cap R=\fp\}.$ These belongs to $\min(\widehat{R})$.
Note that if $\fp$ are $\fq$ are distinct minimal primes, then $\Ass_{\widehat{R}}(R/  \fp\otimes_R\widehat{R})\cap\Ass_{\widehat{R}}(R/  \fq\otimes_R\widehat{R})=\emptyset.$ We conclude
by this that $\{\varphi([R/ \fp]):\fp\in\min(R)\}$ are $\mathbb{Z}$-linearly independent. The proof is now complete.
\end{proof}

\section{weakly and strongly algebraic modules}

Evans   asked which  local rings satisfy   Krull-Remak-Schmidt
property.

\begin{observation}\label{schmit}
Let $R$ be a local ring that fails  Krull-Remak-Schmidt
property. Then there is a weakly algebraic module  which is not algebraic.
\end{observation}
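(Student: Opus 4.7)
The plan is to exploit the gap between the Krull--Remak--Schmidt property, which does hold for finitely generated modules over the complete local ring $\widehat{R}$ (by Azumaya's theorem, since endomorphism rings of finitely generated indecomposable $\widehat{R}$-modules are local), and its failure for $R$. Besides KRS over $\widehat{R}$, I will use one further background input, namely the reflection principle: for finitely generated $R$-modules $M, N$ one has $M \simeq N$ if and only if $\widehat{M}\simeq \widehat{N}$. The latter follows from the identification $\Hom_R(M,N)\otimes_R\widehat{R}\simeq \Hom_{\widehat{R}}(\widehat{M},\widehat{N})$, which lets one approximate a given $\widehat{R}$-isomorphism $\psi$ by $\widehat{\phi}$ for some $\phi\in\Hom_R(M,N)$, and a Nakayama argument applied to $\widehat{\phi}\circ\psi^{-1}=\id+\delta$ (with $\delta$ valued in $\widehat{\fm}\widehat{N}$) then upgrades $\phi$ to an $R$-isomorphism.

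The first step is to observe that the failure of KRS over $R$ forces some indecomposable module $M\in\Mod(R)$ whose completion $\widehat{M}$ decomposes non-trivially over $\widehat{R}$. Indeed, if every indecomposable finitely generated $R$-module remained indecomposable after completion, then any two decompositions $\bigoplus_i M_i \simeq \bigoplus_j N_j$ of a common finitely generated $R$-module into indecomposables would complete to two indecomposable decompositions of the same $\widehat{R}$-module. KRS over $\widehat{R}$ would then yield a bijection $\sigma$ with $\widehat{M_i}\simeq \widehat{N_{\sigma(i)}}$, and the reflection principle would give $M_i\simeq N_{\sigma(i)}$, contradicting the failure of KRS over $R$.

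Fix such an indecomposable $M$ together with a nontrivial splitting $\widehat{M}\simeq \mathcal{N}_1\oplus\mathcal{N}_2$. By construction $\mathcal{N}_1$ is a direct summand of the algebraic module $\widehat{M}$, hence is weakly algebraic. Assume toward a contradiction that $\mathcal{N}_1$ is algebraic, say $\mathcal{N}_1\simeq\widehat{N_1}$ for some $N_1\in\Mod(R)$. Applying the principle of 2 of 3 (Discussion \ref{2 of 3}) to the algebraic pair $\widehat{M}$ and $\mathcal{N}_1$ shows that the complementary summand $\mathcal{N}_2$ is algebraic as well, $\mathcal{N}_2\simeq \widehat{N_2}$ for some $N_2\in\Mod(R)$. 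Then $\widehat{M}\simeq \widehat{N_1}\oplus \widehat{N_2}\simeq \widehat{N_1\oplus N_2}$, and the reflection principle yields $M\simeq N_1\oplus N_2$. Since $\mathcal{N}_1$ and $\mathcal{N}_2$ are both nonzero, so are $N_1$ and $N_2$, contradicting the indecomposability of $M$; therefore $\mathcal{N}_1$ is weakly algebraic but not algebraic.

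The principal obstacle is establishing the reflection principle $\widehat{M}\simeq\widehat{N}\Rightarrow M\simeq N$ for finitely generated $R$-modules; the remaining steps are a mechanical combination of KRS over $\widehat{R}$ with the 2-of-3 principle.
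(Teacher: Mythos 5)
Your argument is correct and follows essentially the same route as the paper's: use Krull--Remak--Schmidt over $\widehat{R}$ to produce an indecomposable finitely generated $R$-module whose completion decomposes nontrivially, then combine the 2-of-3 principle with faithfully flat descent along $R\to\widehat{R}$ to conclude that the two summands cannot both be algebraic, so one of them is weakly algebraic but not algebraic. The only (cosmetic) difference is the descent lemma invoked: you use descent of isomorphisms ($\widehat{M}\simeq\widehat{N}\Rightarrow M\simeq N$), while the paper uses descent of direct summands ($\widehat{L_1}$ a summand of $\widehat{L_2}$ implies $L_1$ a summand of $L_2$, its Fact A); both are standard consequences of $\Hom_R(M,N)\otimes_R\widehat{R}\simeq\Hom_{\widehat{R}}(\widehat{M},\widehat{N})$.
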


\begin{proof}
By $X|Y$ we mean $X$ is a direct summand of $Y$.
Due to the assumption  there are  finite family of indecomposable modules $\{M_i\}_{i}$ and $\{N_j\}_{j}$ such that
$\bigoplus_i M_i\simeq \bigoplus_j N_j$ but $M_1$ is not isomorphic to each of $N_j$.  Since completion commutes with finite direct sum, $\bigoplus_i \widehat{M}_i\simeq \bigoplus_j \widehat{N}_j$.  Suppose on the contrary that $\widehat{M}_1$ is indecomposable as an $\widehat{R}$-module.
As complete local rings
satisfy  Krull-Remak-Schmidt (see \cite[Corollay 1.10]{lw}), we should have $\widehat{M}_1\simeq \mathcal{N}$ where $\mathcal{N}|\widehat{N}_j$ for some $j$.
Thus, $\widehat{M}_1| \widehat{N} _j$.
  \begin{enumerate}
\item[Fact ] A): (see the proof of \cite[Proposition 3.1]{sean}) Let $A\to B$ be a flat ring homomorphism, $L_1$ and $L_2$ be two finitely generated $A$-modules. If $L_1\otimes B | L_2\otimes B$, then
$L_1| L_2 $.
\end{enumerate}
In view of Fact A), $M_1| N _j$. This contradiction shows that $\widehat{M}_1$ is  decomposable: $\widehat{M_1}\simeq\mathcal{A}\oplus \mathcal{B}$ for some $\widehat{R}$-modules
$\mathcal{A}$ and $\mathcal{B}$. It is now clear that both of $\mathcal{A}$ and $\mathcal{B}$ are weakly algebraic. Suppose on the contradiction both  of $\mathcal{A}$ and $\mathcal{B}$ are algebraic. There are finitely generated $R$-modules $A$ and $B$ such that $\mathcal{A}\simeq\widehat{A}$ and $\mathcal{B}\simeq\widehat{B}$. This shows that $\widehat{A}|\widehat{M}_1$. Another use of Fact A) shows that $A|M_1$. This contradicts the fact that $M_1$ is indecomposable.
By principle 2 of 3, $\mathcal{A}$ (resp. $\mathcal{B}$) is a weakly algebraic module  and it is not algebraic.
\end{proof}

\begin{corollary} \label{krs}(Levy-Odenthal) Let
$R$ be a $1$-dimensional analytically irreducible local ring. Then $R$ satisfies   Krull-Remak-Schmidt property.
\end{corollary}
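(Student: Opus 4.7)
The plan is to invoke the contrapositive of Observation \ref{schmit}. Assume, toward contradiction, that $R$ fails the Krull--Remak--Schmidt property. Then the construction in the proof of Observation \ref{schmit} produces a finitely generated indecomposable $R$-module $M$ together with a nontrivial decomposition $\widehat{M}\simeq\mathcal{A}\oplus\mathcal{B}$ of $\widehat{R}$-modules in which at least one of $\mathcal{A},\mathcal{B}$ is not algebraic; equivalently, $\widehat{M}$ is algebraic but not strongly algebraic. Thus it suffices to establish, under the hypotheses on $R$, that every algebraic $\widehat{R}$-module is strongly algebraic, i.e.\ that every weakly algebraic $\widehat{R}$-module is in fact algebraic.

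To establish this, the plan is to take an arbitrary direct summand $\mathcal{N}$ of an algebraic module $\widehat{L}$ and produce $N\in\Mod(R)$ with $\widehat{N}\simeq\mathcal{N}$. Since $R$ is analytically irreducible and $1$-dimensional, it is analytically unramified; hence by Nagata the integral closure $\overline{R}$ is module-finite over $R$, and by analytic irreducibility it is local, hence a DVR. Consequently $S:=\widehat{\overline{R}}=\overline{\widehat{R}}$ is a complete DVR, module-finite over $\widehat{R}$. I would first split off torsion via
\[
0\lo\mathcal{T}\lo\mathcal{N}\lo\mathcal{F}\lo 0,
\]
where $\mathcal{T}$ is the torsion submodule of $\mathcal{N}$ (of finite length, since $\widehat{R}$ is a $1$-dimensional domain and $\Supp(\mathcal{T})\subseteq\{\widehat{\fm}\}$) and $\mathcal{F}$ is the torsion-free quotient. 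By Discussion \ref{art}(i), $\mathcal{T}$ is algebraic. For $\mathcal{F}$ I would exploit that $\mathcal{F}\otimes_{\widehat{R}}S\hookrightarrow S^{r}$ with $r=\rank_{\widehat{R}}\mathcal{F}$, and that the parallel rank-$r$ inclusion of torsion-free $R$-modules into $\overline{R}^{r}$, together with $\overline{R}^{r}\otimes_R\widehat{R}\simeq S^{r}$, permits the construction of $F\subseteq\overline{R}^{r}$ whose completion recovers $\mathcal{F}$.

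Once both $\mathcal{T}$ and $\mathcal{F}$ are algebrized, the extension $\mathcal{N}$ is reassembled by the Yoneda plus Five-Lemma technique of Remark \ref{idealto}(ii): its class lives in $\Ext^{1}_{\widehat{R}}(\mathcal{F},\mathcal{T})$, which is supported at $\widehat{\fm}$ (since $\mathcal{T}$ is of finite length and $\mathcal{F}$ becomes free after inverting any non-zerodivisor) and therefore of finite length, hence equals $\Ext^{1}_R(F,T)\otimes_R\widehat{R}\simeq\Ext^{1}_R(F,T)$ by Discussion \ref{art}. This yields an $R$-extension $0\to T\to N\to F\to 0$ whose completion matches the original sequence by the Five Lemma, producing $\widehat{N}\simeq\mathcal{N}$. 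The principal obstacle is the torsion-free algebrization of $\mathcal{F}$; here analytic irreducibility (locality of $\overline{R}$) is decisive, since otherwise $\overline{R}$ would split into several DVRs and an obstruction analogous to the one exhibited in Proposition \ref{a23} would appear.
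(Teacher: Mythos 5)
Your reduction is exactly the paper's: apply the contrapositive of Observation \ref{schmit}, so that it suffices to prove every complete module over a $1$-dimensional analytically irreducible local ring is algebraic. The difference is that at this point the paper simply cites Levy--Odenthal \cite{levy} (or \cite[Proposition 3.3]{sean}) for that fact, whereas you attempt to reprove it. Your outline for the reproof is the right one (and is essentially the argument of those references): split off the finite-length torsion, algebrize the torsion-free quotient through the complete DVR $S=\widehat{\overline{R}}$, and reassemble by the Yoneda--plus--five-lemma device of Remark \ref{idealto}(ii). The preliminary facts you invoke (analytically irreducible $\Rightarrow$ analytically unramified, $\overline{R}$ module-finite and local, $S$ a complete DVR) are all correct.

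The one genuine gap is the torsion-free step, which is precisely the crux: ``permits the construction of $F\subseteq\overline{R}^{r}$ whose completion recovers $\mathcal{F}$'' is asserted, not proved, and it is not automatic which $R$-submodule of $\overline{R}^{r}$ to take. The standard way to close it is the conductor argument: let $\fc=(R:_R\overline{R})$, a nonzero (hence $\fm$-primary) ideal. Inside $\mathcal{F}\otimes_{\widehat{R}}Q\simeq Q^{r}$ ($Q$ the fraction field) the $S$-module $S\mathcal{F}$ generated by $\mathcal{F}$ is free of rank $r$, and $\fc S^{r}\subseteq\mathcal{F}\subseteq S^{r}$. Since $S^{r}/\fc S^{r}$ has finite length and is canonically isomorphic to $\overline{R}^{r}/\fc\overline{R}^{r}$, the submodule $\mathcal{F}/\fc S^{r}$ descends, and $F$ is its preimage in $\overline{R}^{r}$; flatness of $R\to\widehat{R}$ then gives $\widehat{F}\simeq\mathcal{F}$. (Two smaller points: $\mathcal{F}\otimes_{\widehat{R}}S$ need not inject into $S^{r}$ --- it can have torsion --- so you should work with $S\mathcal{F}$ rather than the tensor product; and once the conductor argument is available it algebrizes every torsion-free module directly, so the extension class bookkeeping, while correct, is doing the easy half of the work.) With that step supplied, your argument is a complete, self-contained proof of what the paper outsources to \cite{levy}.
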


\begin{proof}
By the above observation its enough to show any module is algebraic. This is  in \cite{levy} (see \cite[Proposition 3.3]{sean} for a  simple proof).
\end{proof}

We need the following implicit result from \cite{sean}:

\begin{fact}\label{corelkik}
Let $\mathcal{M}$ be   complete and locally free over the punctured spectrum. Then $\mathcal{M}$ is weakly algebraic.\end{fact}\begin{proof}
Indeed, the  map $R \to \widehat{R}$ factors through the henselization $R^h$. By a result of Elkik (see \cite[Theorem 10.10]{lw}) there is a finitely generated $R^h$-module $M_h$ such that $M_h\otimes_{R^h}\widehat{R}\simeq \mathcal{M}$. By \cite[Corollary 3.5]{sean}   there is a
finitely generated  $R$-module $M$ such that $M_h\oplus L_h\simeq M\otimes_RR^h$  for some $R^h$-module $L_h$. So,
 $$\mathcal{M}\oplus(L_h\otimes_{R^h}\widehat{R})\simeq (M_h\otimes_{R^h}\widehat{R})\oplus (L_h\otimes_{R^h}\widehat{R})\simeq( M\otimes_RR^h)\otimes_{R^h}\widehat{R}\simeq M\otimes_R\widehat{R}.$$
\end{proof}

\begin{proposition}\label{bm2} Let $(R,\fm)$ be a local ring. The following assertions are true:
\begin{enumerate}
\item[i)]  Suppose $R$ is analytically unramified and of dimension one.  Then any complete module is weakly algebraic.
\item[ii)] Suppose $R$ is essentially of finite type over a prime field and any complete $\widehat{R}$-module is weakly algebraic. Then $\dim(R)\leq 1$.
\end{enumerate}
\end{proposition}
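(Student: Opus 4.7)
The plan is to deduce part (i) quickly from Fact \ref{corelkik}, and to prove part (ii) by contrapositive: given $\dim R\geq 2$ with $R$ essentially of finite type over a prime field, I will exhibit a cyclic complete module $\widehat{R}/\mathfrak{p}$ that is not weakly algebraic.

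For (i), I first observe that analytic unramification and $\dim R=1$ together make $\widehat{R}$ a one-dimensional reduced local ring, so its punctured spectrum equals $\Min(\widehat{R})$ and each $\widehat{R}_{\mathfrak{p}}$ for $\mathfrak{p}\in\Min(\widehat{R})$ is an Artinian reduced local ring, hence a field. Consequently $\mathcal{M}_{\mathfrak{p}}$ is free for every such $\mathfrak{p}$ and every complete module $\mathcal{M}$, so $\mathcal{M}$ is locally free on the punctured spectrum. Fact \ref{corelkik} then yields weak algebraicity, with no torsion-free hypothesis required.

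For (ii) the central tool is the following lemma that I would establish first: if $\widehat{R}/\mathfrak{p}$ is a direct summand of $\widehat{M}$ for some finitely generated $R$-module $M$, then $\mathfrak{p}\in\Ass_{\widehat{R}}(\widehat{M})=\bigcup_{\mathfrak{q}\in\Ass_R(M)}\Ass_{\widehat{R}}(\widehat{R/\mathfrak{q}})$ by the standard formula for associated primes under faithfully flat base change; in particular $\mathfrak{p}$ sits in the formal fiber over some prime $\mathfrak{q}\subseteq\mathfrak{p}\cap R$ of $R$. After reducing to the case where $R$ is an excellent domain of dimension $\geq 2$ (and $\widehat{R}$ reduced, by excellence), I will select $x,y\in\mathfrak{m}$ algebraically independent over the prime field $k$---available because the transcendence degree of $\operatorname{Frac}(R)$ over $k$ is at least $\dim R\geq 2$---obtaining an embedding $k[[x,y]]\hookrightarrow\widehat{R}$ via Cohen's structure theorem. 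Countability of $k$ versus uncountability of $k[[x]]$ produces $f(x)\in k[[x]]$ transcendental over $k(x)$. Choosing $\mathfrak{p}$ to be a height-one prime of $\widehat{R}$ minimal over $(y-f(x))\widehat{R}$ and not lying in $\Min(\widehat{R})$, a transcendence computation yields $\mathfrak{p}\cap R=(0)$; but then the only candidate $\mathfrak{q}=(0)$ in the key lemma must fail, since $\Ass_{\widehat{R}}(\widehat{R})=\Min(\widehat{R})$ is entirely in height zero while $\Ht\mathfrak{p}\geq 1$, producing the desired contradiction.

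The hard step will be verifying $\mathfrak{p}\cap R=(0)$ rigorously beyond the toy case $R=k[x,y]_{(x,y)}$, and handling the case when $\widehat{R}$ is merely reduced rather than a domain. This requires a careful analysis of how $k[x,y]\subseteq R$ interacts with $k[[x,y]]\hookrightarrow\widehat{R}$, leveraging faithful flatness provided by Cohen structure and excellence, together with the transcendence of $f$ over $k(x)$ to force any $g(x,y)\in R$ satisfying $g(x,f(x))=0$ to vanish. Ensuring that the preliminary reduction preserves ``essentially of finite type over a prime field'' will also demand some care.
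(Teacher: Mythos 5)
Part (i) of your plan is exactly the paper's argument: reduce to local freeness on the punctured spectrum via $R_0$ for the reduced one-dimensional ring $\widehat{R}$, then invoke Fact \ref{corelkik}; nothing to add there.

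Part (ii) takes a genuinely different route. The paper's proof is a pure cardinality argument: $R$ essentially of finite type over a prime field is countable, so there are only countably many finitely generated $R$-modules up to isomorphism, hence only countably many algebraic (and, by Krull--Remak--Schmidt over $\widehat{R}$, only countably many weakly algebraic) modules up to isomorphism; if $\dim R>1$, countable prime avoidance in the complete ring produces uncountably many height-one primes $\mathcal{P}$, and the pairwise non-isomorphic modules $\widehat{R}/\mathcal{P}$ cannot all be weakly algebraic. Your route instead exhibits one explicit non-example via the formal fibers. Your key lemma is correct and worth recording: if $\widehat{R}/\mathfrak{p}$ is a direct summand of $\widehat{M}$ then $\mathfrak{p}\in\Ass_{\widehat{R}}(\widehat{M})=\bigcup_{\mathfrak{q}\in\Ass_R(M)}\Ass_{\widehat{R}}(\widehat{R}/\mathfrak{q}\widehat{R})$ by \cite[Theorem 23.2]{Mat}, and every prime in $\Ass_{\widehat{R}}(\widehat{R}/\mathfrak{q}\widehat{R})$ contracts to $\mathfrak{q}$; so it suffices to produce $\mathfrak{p}$ with $\mathfrak{p}\notin\Ass_{\widehat{R}}(\widehat{R}/(\mathfrak{p}\cap R)\widehat{R})$, e.g.\ a non-minimal prime of the generic formal fiber of a domain. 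What your approach buys is an explicit witness and no appeal to countability of $R$; what the paper's approach buys is brevity and the avoidance of all formal-fiber geometry.

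However, as written your part (ii) has a genuine gap, and it sits exactly where you flagged it. The assertion that a minimal prime $\mathfrak{p}$ of $(y-f(x))\widehat{R}$ satisfies $\mathfrak{p}\cap R=(0)$ (and is not a minimal prime of $\widehat{R}$) is, for general $R$ essentially of finite type over a field of dimension $\geq 2$, precisely the statement that the generic formal fiber has positive dimension --- a theorem of Matsumura (the generic formal fiber of such a local domain has dimension $\dim R-1$), not a routine transcendence computation. The toy computation for $k[x,y]_{(x,y)}$ does not transfer automatically: $\widehat{R}$ need not equal $k[[x,y]]$, the minimal prime over $(y-f(x))$ need not be principal, and the reduction to a domain requires care because a summand of $\widehat{M}$ for an $R$-module $M$ is not obviously a summand of a completed $R/\mathfrak{q}_0$-module (though your Ass-lemma sidesteps this if you apply it over $R$ directly). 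To make the proof complete you should either cite the formal-fiber dimension theorem outright or replace this step with the paper's counting argument, which needs none of it.
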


\begin{proof}i) Let
$\mathcal{M}$ be any complete module.
Since $\widehat{R}$ is reduced, $\widehat{R}$ satisfies  Serre's condition  $\R_0$. This means that  $\widehat{R}$ is  regular over $\mathcal{X}:=\Spec(\widehat{R})\setminus\{\widehat{\fm}\}$. Zero-dimensional regular local rings
are field. In particular, any module over a such ring is a vector space.   We conclude by this that $\mathcal{M}_{\fp}$ is free over $\widehat{R}_{\fp}$ for all $\fp\in \mathcal{X}$. In view of Fact \ref{corelkik},
$\mathcal{M}$ is weakly algebraic.

ii) Recall that the prime field in characteristic zero is $\mathbb{Q}$ and in the characteristic $p$ is $\mathbb{F}_p$.
Since $R$ is essentially of finite type over a prime field, we conclude that $R$ is countable. We are going to use a trick that we learn
from \cite[Example 3.6]{sean}. As $R$ is countable, the
class of finitely generated modules is countable up to the isomorphism class. This implies that the class
of algebraic modules is countable, again up to the isomorphism class. The same thing holds for weakly algebraic modules. Suppose on the contradiction that $\dim R>1$.
It follows by countable prime avoidance for complete rings that there are uncountable family of height one prime ideals $\{\mathcal{P}:\mathcal{P}\in\Gamma\}$.
This implies that $\{\frac{\widehat{R}}{\mathcal{P}}:\mathcal{P}\in \Gamma\}$ is an uncountable family of complete modules up to isomorphism.
So, there is a complete module that is not weakly algebraic, a contradiction. \end{proof}

If $x\in \widehat{R}$ is not zero-divisor then $(x)\simeq\widehat{R}$. So, it is algebraic.  What can say when $x$ is  zero-divisor?
Let us revisit $R_0:=\mathbb{C}[x, y ]_{(x,y )}/(y^2 - x^3 - x^2)$ and recall that $\widehat{R}_0\simeq\mathbb{C}[[u, v ]]/(UV )$.
It is shown in \cite[Example A.5]{kmv} that $(u)$ is not algebraic. They showed that $(u)\oplus(v)$
is algebraic  via three different arguments: 1)  Thomason's localization theorem, 2) Levy-Odenthal's  criterion, and 3) a direct calculation over a triangulated category. This is a sample of:

\begin{proposition}\label{bm}
Let $\widehat{R}$ be any 1-dimensional reduced local ring. Let
 $0\neq x\in \widehat{R}$ be a zero-divisor. Let $\fp\in\Ass(\widehat{R})$ be such that $x\in\fp$.  Then $\widehat{R}/(x)\oplus \widehat{R}/(y)$ is algebraic  for any  $y\in \bigcap_{\fq\in\Ass(R)\setminus\fp} \fq$.
\end{proposition}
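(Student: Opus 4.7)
The strategy is to realize $\widehat{R}/(x)\oplus\widehat{R}/(y)$ as the middle term of a short exact sequence whose outer terms are manifestly algebraic, then lift the Yoneda class along the flat map $R\to\widehat{R}$ using the argument already deployed in Remark \ref{idealto}(ii). First I would establish the two vanishing identities $xy=0$ and $(x)\cap(y)=0$. Since $\widehat{R}$ is reduced and one-dimensional, $\Ass(\widehat{R})=\Min(\widehat{R})$ and $\bigcap_i\fp_i=(0)$. The image of $xy$ in $\widehat{R}/\fp$ is zero because $x\in\fp$, and its image in $\widehat{R}/\fq$ for any other minimal prime $\fq$ is zero because $y\in\fq$; hence $xy\in\bigcap_i\fp_i=0$. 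The same argument applied to a putative element $r=ax=by$ of $(x)\cap(y)$ places $r$ inside every $\fp_i$, forcing $r=0$. Together these yield the short exact sequence
$$0\to \widehat{R}\xrightarrow{r\mapsto(\bar{r},\bar{r})}\widehat{R}/(x)\oplus \widehat{R}/(y)\to \widehat{R}/(x,y)\to 0. \quad (\star)$$

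Next I would show that $\widehat{R}/(x,y)$ is algebraic. Assuming $y\neq 0$ (otherwise $\widehat{R}/(y)=\widehat{R}$ and the statement degenerates), we have $y\notin\fp$, for otherwise $y\in\bigcap_i\fp_i=0$. When in addition $x$ lies in no minimal prime other than $\fp$ (automatic whenever $\widehat{R}$ has exactly two minimal primes, since then $\fp\cap\fq=(0)$), no minimal prime of $\widehat{R}$ contains both $x$ and $y$. Consequently $\Supp(\widehat{R}/(x,y))\subseteq\{\fm\}$, so the cokernel has finite length and is algebraic by Discussion \ref{art}: write $\widehat{R}/(x,y)\simeq\widehat{N}$ for some finite-length $R$-module $N$.

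The concluding step is to lift the Yoneda class of $(\star)$. Since $N$ has finite length, $\Ext^1_R(N,R)$ is finitely generated with support inside $\{\fm\}$, hence itself of finite length; flat base change then gives $\Ext^1_{\widehat{R}}(\widehat{N},\widehat{R})\simeq\Ext^1_R(N,R)\otimes_R\widehat{R}\simeq\Ext^1_R(N,R)$. Running the Yoneda-five lemma argument of Remark \ref{idealto}(ii), I would produce an $R$-extension $0\to R\to M\to N\to 0$ whose base change to $\widehat{R}$ is $(\star)$, yielding $\widehat{M}\simeq \widehat{R}/(x)\oplus\widehat{R}/(y)$. The hard part will be the passage to the general case in which $x$ belongs to several minimal primes; then $\widehat{R}/(x,y)$ acquires support at some $\fq\neq\fp$ and is no longer of finite length, so Discussion \ref{art} no longer covers the cokernel and one needs either a separate algebrization of $\widehat{R}/(x,y)$ (which is at least weakly algebraic by Proposition \ref{bm2}(i)) combined with descent control on the relevant $\Ext$, or an inductive decomposition of $x$ reducing matters to the simple case via the principle $2$ of $3$ from Discussion \ref{2 of 3}.
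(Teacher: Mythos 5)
Your proposal follows the paper's proof essentially verbatim: the same short exact sequence $0\to\widehat{R}\to\widehat{R}/(x)\oplus\widehat{R}/(y)\to\widehat{R}/(x,y)\to0$ obtained from $(x)\cap(y)\subseteq\fp\cap\bigl(\bigcap\fq_i\bigr)=0$, the same finite-length/algebraicity treatment of the cokernel via Discussion \ref{art}, and the same Yoneda--five-lemma descent of the extension class along $R\to\widehat{R}$. The ``hard case'' you flag at the end (when $x$ lies in a minimal prime other than $\fp$, so that $\widehat{R}/(x,y)$ fails to have finite length) is not handled in the paper either --- the paper simply asserts ``$x\notin\fq_i$ and $y\notin\fp$'' without justification --- so your account is, if anything, the more careful record of what is actually proved.
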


\begin{proof}
Since $\widehat{R}$ has a zero divisor $0\notin\Ass(\widehat{R})$.
Reduced rings satisfy Serre's condition $\Se_1$. Hence $\Ass(\widehat{R})=\min(\widehat{R})$.
If $\Ass(\widehat{R})$ were be singleton e.g. $\Ass(\widehat{R})=\{P\}$ for some $P$, then we should had $P\neq0$ (because there are non-trivial zero-divisors) and so there are nilpotent elements.
But the ring is reduced. This shows  that $|\Ass(\widehat{R})|\geq2$.
Let
$\Ass(\widehat{R})=\min(\widehat{R})=:\{\fp,\fq_1,\ldots,\fq_n\}$.
 Due to the primary decomposition,  $(x)\cap(y)\subseteq \fp\cap (\bigcap\fq_i)=\nil(\widehat{R})=0$. Look at the exact sequence
$$0\lo \widehat{R}\lo \widehat{R}/(x)\oplus \widehat{R}/(y)\lo \widehat{R}/(x,y)\lo 0 \quad(\ast)$$
Recall that $\fp\cap (\bigcap\fq_i)=0$. As $x\notin \fq_i$ and $y\notin \fp$, we have $\rad(x,y)=\fm$. Thus,  $\widehat{R}/(x,y)$ is of finite
 length.  Due to Discussion \ref{art} $\widehat{R}/(x,y)$ is algebraic. Suppose it extended from an $R$-module $M$. Since
 $\Ext^1_{\widehat{R}}(\widehat{R}/(x,y),\widehat{R})$ is of finite length as an $\widehat{R}$-module, we have
$\Ext^1_{R}(M,R)\simeq\Ext^1_{R}(M,R)\otimes_R{\widehat{R}}\simeq\Ext^1_{\widehat{R}}(\widehat{R}/(x,y),\widehat{R})
$. We deduce from Yoneda's
definition of $\Ext^1$ that there is an exact sequence $0\to R\to N\to M \to 0$ such that its completion is
   $(\ast)$. By  5-lemma
 $\widehat{N }\simeq\widehat{R}/(x)\oplus \widehat{R}/(y)$.
\end{proof}

The singular category of a ring $A$ is the Verdier quotient
$D_{Sg}(A) := \frac{\D^b(\mod A)}{\K^b(\proj A)}$. By $\Sigma:D_{Sg}(A)\to D_{Sg}(A)$ we mean the suspension functor.
The following was proved by using derived-categorical  methods in \cite[Example A.5]{kmv}:

\begin{corollary}\label{bm1}
 Let $\widehat{R}_0\simeq k[[u, v ]]/(uv )$. Then
 $(u)\oplus\Sigma (u)=k\in(D_{Sg}(\widehat{R}_0),\Sigma)$.
\end{corollary}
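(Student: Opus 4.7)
The plan is to apply Proposition \ref{bm} to $\widehat{R}_0=k[[u,v]]/(uv)$, extract the short exact sequence produced inside its proof, and then pass to the singular category $D_{Sg}(\widehat{R}_0)$, where free modules vanish.

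First I would verify that $\widehat{R}_0$ meets the hypotheses of Proposition \ref{bm}: it is a one-dimensional reduced complete local ring with $\Ass(\widehat{R}_0)=\{(u),(v)\}$. Taking $x=u$, $\fp=(u)$, and $y=v\in(v)=\bigcap_{\fq\in\Ass(\widehat{R}_0)\setminus\fp}\fq$, the argument in the proof of Proposition \ref{bm} produces the short exact sequence
$$0\lo \widehat{R}_0\lo \widehat{R}_0/(u)\oplus \widehat{R}_0/(v)\lo \widehat{R}_0/(u,v)\lo 0,$$
and since $(u,v)=\fm$, the rightmost module is $k$. Viewed in $D_{Sg}(\widehat{R}_0)$ this short exact sequence yields a distinguished triangle whose leftmost term $\widehat{R}_0$ lies in $\K^b(\proj \widehat{R}_0)$ and is therefore zero. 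Hence the middle arrow becomes an isomorphism
$$\widehat{R}_0/(u)\oplus \widehat{R}_0/(v)\simeq k\quad\text{in}\quad D_{Sg}(\widehat{R}_0).$$

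Next I would identify both summands in terms of the submodule $(u)\subset \widehat{R}_0$. A direct computation shows $\Ann_{\widehat{R}_0}(u)=(v)$ (write a general element as $f(u)+g(v)$ with $g(0)=0$; then multiplying by $u$ kills the $g$-part, and $uf(u)=0$ forces $f=0$). Thus the multiplication map $\widehat{R}_0\twoheadrightarrow (u)$, $r\mapsto ru$, identifies $(u)\simeq \widehat{R}_0/(v)$ as $\widehat{R}_0$-modules, and consequently also in $D_{Sg}$. For the other summand, the tautological sequence $0\to (u)\to \widehat{R}_0\to \widehat{R}_0/(u)\to 0$ becomes the triangle $(u)\to 0\to \widehat{R}_0/(u)\to \Sigma(u)$ in $D_{Sg}(\widehat{R}_0)$, whence $\widehat{R}_0/(u)\simeq \Sigma(u)$ there. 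Substituting these two identifications into the displayed isomorphism yields $(u)\oplus \Sigma(u)\simeq k$ in $D_{Sg}(\widehat{R}_0)$, as required.

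I do not anticipate a serious obstacle; the step requiring care is simply the translation between short exact sequences and distinguished triangles, together with the annihilator computation that pins down $(u)\simeq\widehat{R}_0/(v)$. The conceptual content has been offloaded to Proposition \ref{bm}, whose algebrization of $\widehat{R}_0/(u)\oplus \widehat{R}_0/(v)$ is used here in place of the three derived-categorical arguments (Thomason localization, the Levy--Odenthal criterion, and direct triangulated calculation) employed in \cite{kmv}.
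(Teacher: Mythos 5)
Your proposal is correct and follows essentially the same route as the paper: both arguments rest on the exact sequence $0\to\widehat{R}_0\to\widehat{R}_0/(u)\oplus\widehat{R}_0/(v)\to k\to 0$, the vanishing of free modules in $D_{Sg}(\widehat{R}_0)$, and the identifications $\widehat{R}_0/(v)\simeq(u)$ and $\widehat{R}_0/(u)\simeq(v)\simeq\Sigma(u)$. The only cosmetic difference is that you route the exact sequence through Proposition \ref{bm} and spell out the annihilator computation, whereas the paper writes the sequence down directly; neither step requires the algebrization content of that proposition.
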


\begin{proof}
 Recall that $\frac{\widehat{R}_0}{(u)}\simeq(v)$ and $\frac{\widehat{R}_0}{(v)}\simeq(u)$.
From this $\Sigma (u)=(v)$. In  $D_{Sg}(\widehat{R}_0)$ the class of free modules realized as a zero object. Apply this along with the  exact sequence
$0\to{\widehat{R}}\to\frac{\widehat{R}_0}{(u)}\oplus\frac{\widehat{R}_0}{(v)} \to k \to 0$
to see  $(u)\oplus\Sigma (u)=k\in D_{Sg}(\widehat{R}_0)$.
\end{proof}

Let $(R,\fm)$ be a regular local ring. Horrocks proved that
any complete module  free over
$\Spec(\widehat{R})\setminus\{\fm_{\widehat{R}}\}$  is algebraic, see \cite[\S 8]{h}.
This extends in the following sense:

\begin{theorem}(Auslander-Bridger, Peskine-Szpiro)\label{Aus}
Let $\mathcal{M}$ be a  complete module of finite projective dimension such that
$\Ext^+_{\widehat{R}}(\mathcal{M},\widehat{R})$ is of finite length. Then $\mathcal{M}$ is strongly algebraic.
\end{theorem}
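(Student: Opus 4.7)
The plan is to use induction on $n = \pd_{\widehat{R}}(\mathcal{M})$, after first reducing strong algebrization to plain algebrization. If $\mathcal{N}$ is a direct summand of $\mathcal{M}$, then $\pd_{\widehat{R}}(\mathcal{N}) \leq n$ and each $\Ext^i_{\widehat{R}}(\mathcal{N}, \widehat{R})$ is a direct summand of $\Ext^i_{\widehat{R}}(\mathcal{M}, \widehat{R})$, hence of finite length for $i > 0$. So the hypotheses pass to any summand, and it suffices to prove $\mathcal{M}$ itself is algebraic. The base case $n = 0$ is immediate, since $\mathcal{M}$ is then finitely generated free over $\widehat{R}$.

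For the inductive step, pick a surjection $\widehat{R}^t \twoheadrightarrow \mathcal{M}$ arising from a minimal generating set and let $\mathcal{K}$ be its kernel. Then $\pd_{\widehat{R}}(\mathcal{K}) = n-1$, and applying $\Hom_{\widehat{R}}(-, \widehat{R})$ to the sequence $0 \to \mathcal{K} \to \widehat{R}^t \to \mathcal{M} \to 0$ yields $\Ext^i_{\widehat{R}}(\mathcal{K}, \widehat{R}) \simeq \Ext^{i+1}_{\widehat{R}}(\mathcal{M}, \widehat{R})$ for every $i \geq 1$, still of finite length. By the inductive hypothesis, pick $K \in \Mod(R)$ with $\widehat{K} \simeq \mathcal{K}$. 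To finish I must lift the embedding $\iota: \mathcal{K} \hookrightarrow \widehat{R}^t$ to an $R$-linear map $\iota': K \to R^t$ whose cokernel algebrizes $\mathcal{M}$. Since $K$ is finitely presented, one has the natural isomorphism
\[
\Hom_R(K, R^t)\otimes_R \widehat{R} \;\simeq\; \Hom_{\widehat{R}}(\mathcal{K}, \widehat{R}^t),
\]
so $\iota$ lies in the $\fm$-adic completion of the finitely generated $R$-module $\Hom_R(K, R^t)$. Choose $\iota' \in \Hom_R(K, R^t)$ with $\widehat{\iota'} \equiv \iota \pmod{\fm^N}$ for $N \gg 0$ and set $M := \coker \iota'$; the goal is $\widehat{M} \simeq \mathcal{M}$.

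The delicate step is this last perturbation: showing that a sufficiently fine $R$-lift of the embedding genuinely recovers $\mathcal{M}$ after completion. The finite-length $\Ext^+$ hypothesis is indispensable here --- via Auslander--Buchsbaum it forces $\mathcal{M}$ to be locally free on the punctured spectrum, so that the Fitting ideals of $\iota(\mathcal{K}) \subseteq \widehat{R}^t$ are primary to $\fm$ and the $\fm^N$-discrepancy between $\iota$ and $\widehat{\iota'}$ can be absorbed into an automorphism of $\widehat{R}^t$ congruent to the identity modulo $\fm^N$. This is the content extracted respectively from Auslander--Bridger (via the transpose $\tr \mathcal{M}$ and its dual resolution, reducing lifting the extension class to an Ext computation over $R$) and from Peskine--Szpiro (via rigidity of finite free complexes with finite-length cohomology, ensuring that close enough matrix approximations yield the same cokernel up to isomorphism).
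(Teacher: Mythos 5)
Your overall skeleton --- reduce strong algebrization to algebrization of $\mathcal{M}$ itself, induct on $\pd$, algebrize the first syzygy $\mathcal{K}$, then lift the inclusion $\iota\colon\mathcal{K}\hookrightarrow\widehat{R}^t$ --- is sound and genuinely different from the paper, which simply quotes Bridger's thesis for the stable isomorphism $\mathcal{M}\oplus\widehat{R}^n\simeq\widehat{N}$ and then invokes the 2-of-3 principle of Discussion~\ref{2 of 3}. But there is a real gap at exactly the point you call delicate: you never prove that $\coker(\widehat{\iota'})\simeq\coker(\iota)$ when $\widehat{\iota'}\equiv\iota\pmod{\fm^N}$. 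The justifications offered do not do the job. Auslander--Buchsbaum is not what makes $\mathcal{M}$ locally free on the punctured spectrum (the correct reason is that $\Ext^{p}_{\widehat{R}_\fp}(\mathcal{M}_\fp,\widehat{R}_\fp)\neq 0$ when $p=\pd(\mathcal{M}_\fp)<\infty$, so vanishing of $\Ext^{>0}$ at $\fp$ forces $p=0$), and, more seriously, local freeness on the punctured spectrum is only the \emph{hypothesis} of a finite-determinacy statement; asserting that the $\fm^N$-discrepancy ``can be absorbed into an automorphism of $\widehat{R}^t$'' is precisely the content of the theorem, and Fitting ideals primary to $\fm$ do not by themselves produce that automorphism.

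The step can be closed, and it is worth seeing how, because this is where the finite-length hypothesis actually enters. Put $\delta:=\iota-\widehat{\iota'}\in\fm^N\Hom_{\widehat{R}}(\mathcal{K},\widehat{R}^t)$. Applying $\Hom_{\widehat{R}}(-,\widehat{R}^t)$ to $0\to\mathcal{K}\to\widehat{R}^t\to\mathcal{M}\to 0$ gives the exact sequence
\[
\Hom_{\widehat{R}}(\widehat{R}^t,\widehat{R}^t)\stackrel{\iota^{\ast}}\lo\Hom_{\widehat{R}}(\mathcal{K},\widehat{R}^t)\lo\Ext^1_{\widehat{R}}(\mathcal{M},\widehat{R})^t,
\]
whose right-hand term is killed by $\fm^{N_0}$ for some $N_0$, by hypothesis. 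So for $N\geq N_0$ the class of $\delta$ vanishes and $\delta\in\im(\iota^{\ast})\cap\fm^{N}\Hom_{\widehat{R}}(\mathcal{K},\widehat{R}^t)$; by Artin--Rees, enlarging $N$ allows one to write $\delta=\iota^{\ast}(\beta)$ with $\beta\in\fm\Hom_{\widehat{R}}(\widehat{R}^t,\widehat{R}^t)$. Then $\alpha:=\id-\beta$ is an automorphism of $\widehat{R}^t$ with $\alpha\circ\iota=\widehat{\iota'}$, so $\widehat{\iota'}$ is injective with cokernel isomorphic to $\mathcal{M}$, and $M:=\coker(\iota')$ completes the induction since $\coker(\iota')\otimes_R\widehat{R}\simeq\coker(\widehat{\iota'})$. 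Without this (or an explicit appeal to a result that contains it, such as Bridger's Proposition 5.44 or Peskine--Szpiro, as the paper does), your proof is incomplete.
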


\begin{proof}
It is shown in \cite[Proposition 5.44]{brid}  that  there is an $n\in \mathbb{N}_0$ and a finitely generated $R$-module $N$ such that  $\mathcal{M}\oplus \widehat{R}^n\simeq\widehat{N}$.
Recall that complete free modules are algebraic.
By  principle 2 of 3 (see Discussion \ref{2 of 3}) $\mathcal{M}$ is algebraic. We apply this for any direct summand of $\mathcal{M}$  to conclude that $\mathcal{M}$ is strongly algebraic.
\end{proof}

\begin{corollary} \label{ty} Let $(A,\fm)$ be an excellent Gorenstein isolated singularity of dimension
	$d \geq 2$. Let $f : G(A) \to G(\widehat{A})$ be the natural map. The following are equivalent: \begin{enumerate}
		\item[i)]   $f_\mathbb{Q}$ is an isomorphism.
		
		\item[ii)]  For any MCM  $\widehat{A}$-module $M$ there exists an MCM $A$-module $N$ and integers
		$r \geq 1$ and $s\geq  0$  such that ${M}^r \oplus\widehat{A}^s
	\cong \widehat{N}.$
		\item[iii)]   For any locally free over punctured spectrum  $\widehat{A}$-module $M$ there exists an  $A$-module $N$ and integers
		$r \geq 1$ and $s\geq  0$  such that ${M}^r \oplus\widehat{A}^s
		\cong \widehat{N}.$
	\end{enumerate}
\end{corollary}

\begin{proof}
$ii)\Rightarrow iii)$:
Recall that G-ring part of the definition of excellence, and over 
G-rings, completion of isolated singularity is as well.
Let $M$ be locally free over punctured spectrum  $\widehat{A}$-module.
Now, we use Auslander-Buchweitz approximation (see \cite[Theorem 11.17]{lw}). Namely, there is an exact sequence $$\zeta:0\lo I\lo C\lo M\lo 0,$$ of $\widehat{A}$-modules where $C$ is maximal Cohen-Macaulay, and $I$ is of finite injective dimension.
By assumption, there exists an  $A$-module $N_1$ and integers
$r \geq 1$ and $s\geq  0$  such that ${C}^r \oplus\widehat{A}^s
\cong \widehat{N}_1.$
Since  $\widehat{A}$ is Gorenstein and $\id(I)<\infty$, we deduce that $\pd(I)<\infty$. Since $M$ is locally free,  $\zeta$ locally splits over punctured spectrum. As
 $\widehat{A}$ is of isolated singularity, we conclude from Auslander-Buchsbaum formula that $C$ is
 locally free over punctured spectrum. Combining these together, it turns out that   $I$ is locally free over punctured spectrum. In particular, we deduce that $\Ext^+_{\widehat{A}}(I^r,\widehat{A})$ is of finite length, which enables us to apply Theorem \ref{Aus}. By previous theorem, there is a finitely generated $A$-module $N_2$ and $t>0$ so that $I^r \oplus\widehat{A}^t
 \cong \widehat{N}_2.$ 
 We then look at the exact sequence$$\zeta_1:0\lo I^r\oplus \widehat{A}^{r+t}\lo C^r\oplus \widehat{A}^{r+t}\lo M^r\oplus \widehat{A}^{r+t}\lo 0,$$
By using Discussion \ref{2 of 3} we get that  $M^r\oplus \widehat{A}^{r+t}$ is algebraic claim.
 
 $iii)\Rightarrow  ii)$ Easy.
 
 The other implications are in \cite[theorem 1.1]{tony}.
\end{proof}\begin{remark}
Similarly, one can extend \cite[Theorem 1.4]{tony} by replacing the map $f$ from Corollary \ref{ty}(i) with the natural map $\theta:\underline{CM}({A})\to \underline{CM}({\widehat{A}})$,  where $ \underline{CM}({A})$ is the stable category of maximal Cohen-Macaulay $A$-modules which is a triangulated category.  
\end{remark}
Let us revisit again $R_0:=\frac{\mathbb{C}[x, y ]_{(x,y )}}{(y^2 - x^3 - x^2)}$. Then $\widehat{R}_0\simeq\mathbb{C}[[u, v ]]/(uv )$. Since $\mathcal{M}:=\widehat{R}_0/(u)$ is locally free on the punctured spectrum, we have $\ell(\Ext^{+}_{\widehat{R}_0}(\mathcal{M},\widehat{R}_0))<\infty$. But, $\mathcal{M}$ is not algebraic (see \cite[Page 335]{sean}).
Thus, finiteness of projective dimension in Theorem \ref{Aus} is important (also, we can't replace finiteness of projective dimension by finiteness of G-dimension).

\begin{discussion}\label{sph}
Let $M$ and $N$ be finitely generated modules such that either $\pd(M)<\infty$
or $\id(N)<\infty$ over any commutative noetherian ring. Let $i\leq d:=\dim R$.
Then $\dim(\Ext^{i}_R(M,N))\leq d-i$. The origin source of this is \cite{sga2}
by Grothendieck  (claim in the case  $\id(N)<\infty$ follows easily, see e.g. \cite{a}).
\end{discussion}

\begin{corollary}\label{sys}
Let $\widehat{R}$ be a $d$-dimensional local ring and $\mathcal{M}$
a complete module of finite projective dimension which is $(d-1)$-syzygy. Then  $\mathcal{M}$ is strongly algebraic.
\end{corollary}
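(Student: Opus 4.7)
The plan is to invoke Theorem \ref{Aus}; it suffices to check that $\Ext^i_{\widehat{R}}(\mathcal{M},\widehat{R})$ has finite length for every $i\geq 1$. Since $\mathcal{M}$ is a $(d-1)$-syzygy, I would fix a finitely generated $\widehat{R}$-module $\mathcal{N}$ together with an exact sequence $0\to \mathcal{M}\to P_{d-2}\to\cdots\to P_0\to \mathcal{N}\to 0$ with each $P_j$ a finitely generated projective; such an $\mathcal{N}$ is automatic by truncating any finite partial projective resolution realizing $\mathcal{M}$ as the $(d-1)$-th syzygy. Standard dimension-shifting then yields
$$\Ext^i_{\widehat{R}}(\mathcal{M},\widehat{R})\simeq \Ext^{i+d-1}_{\widehat{R}}(\mathcal{N},\widehat{R})\qquad (i\geq 1).$$

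The next step is to bound these shifted Ext-groups. The resolution above combined with $\pd_{\widehat{R}}(\mathcal{M})<\infty$ gives $\pd_{\widehat{R}}(\mathcal{N})\leq (d-1)+\pd_{\widehat{R}}(\mathcal{M})<\infty$, and the Auslander-Buchsbaum formula then forces $\pd_{\widehat{R}}(\mathcal{N})\leq \depth(\widehat{R})\leq d$. For $i\geq 2$ the index $i+d-1$ strictly exceeds $\pd_{\widehat{R}}(\mathcal{N})$, so $\Ext^{i+d-1}_{\widehat{R}}(\mathcal{N},\widehat{R})=0$. For $i=1$ the index equals $d$, and Discussion \ref{sph} (applied via $\pd(\mathcal{N})<\infty$) gives $\dim \Ext^{d}_{\widehat{R}}(\mathcal{N},\widehat{R})\leq 0$; a module of dimension at most zero over a local ring is of finite length. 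Combining the two cases, $\Ext^+_{\widehat{R}}(\mathcal{M},\widehat{R})$ has finite length, and Theorem \ref{Aus} concludes that $\mathcal{M}$ is strongly algebraic.

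I do not foresee a real obstacle: the argument is just the interplay between dimension-shifting, the Peskine-Szpiro bound of Discussion \ref{sph}, and Auslander-Buchsbaum. The only mildly delicate point is the choice of $\mathcal{N}$ finitely generated, which is automatic from the definition of a $(d-1)$-syzygy via a finite partial projective resolution. It is worth noting that the corollary recovers parts ii) and iii) of Observation 1.3 by specializing to $d=2$ (torsion-less $=$ $1$-syzygy) and $d=3$ (reflexive implies $2$-syzygy).
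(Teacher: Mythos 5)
Your proposal is correct and follows essentially the same route as the paper: dimension-shift along the syzygy sequence to identify $\Ext^{i}_{\widehat{R}}(\mathcal{M},\widehat{R})$ with $\Ext^{i+d-1}_{\widehat{R}}(\mathcal{N},\widehat{R})$, use Auslander--Buchsbaum to kill the degrees above $d$ and Discussion \ref{sph} to get finite length in degree $d$, then invoke Theorem \ref{Aus}. The only cosmetic difference is that the paper first records the consequence $\pd(\mathcal{M})\leq 1$ and then treats only $\Ext^{1}$, whereas you treat all positive degrees directly; the underlying bounds are identical.
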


\begin{proof}
We assume that $\mathcal{M}\neq 0$ and that $d>0$. First, suppose that $d>1$. By definition of syzygy modules, there is  a complete module
$\mathcal{N}$ and the  exact sequence
$
0 \to\mathcal{M}\to\widehat{R}^{\beta_{d-2}} \to\ldots\lo \widehat{R}^{\beta_0} \to \mathcal{N}\to 0$.
We have $\pd(\mathcal{M})\leq1$.  In the case $d=1$, we apply Auslander-Buchsbaum formula  to see $\pd(\mathcal{M})\leq \depth(\widehat{R})\leq\dim(\widehat{R})=1$. In both cases  $\pd(\mathcal{M})\leq1$. Projective modules are free over local rings, and free modules are strongly algebraic.   Without loss of generality we assume that $\pd(\mathcal{M})=1$.
 We have $\Ext^1_{\widehat{R}}(\mathcal{M},\widehat{R})\simeq\Ext^d_{\widehat{R}}(\mathcal{N},\widehat{R})$.
By Discussion \ref{sph}, $$\dim(\Ext^1_{\widehat{R}}(\mathcal{M},\widehat{R}))=\dim(\Ext^d_{\widehat{R}}(\mathcal{N},\widehat{R}))\leq \dim \widehat{R}-d=0.$$
Consequently, $\Ext^+_{\widehat{R}}(\mathcal{M},\widehat{R})$ is of finite length.  By   Theorem \ref{Aus} $\mathcal{M}$ is strongly algebraic.
\end{proof}

\begin{corollary}\label{ref3}Let
$R$ be  $d$-dimensional and $\mathcal{M}$ a complete module of finite projective dimension. Suppose one of the following holds:
\begin{enumerate}
\item[i)]   $d=1$,
\item[ii)]  $d=2$ and $\mathcal{M}$ is torsion-less, or
\item[iii)]   If $d=3$ and $\mathcal{M}$ is reflexive.
\end{enumerate}
Then $\mathcal{M}$ is strongly algebraic.
\end{corollary}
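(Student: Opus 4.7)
The plan is to deduce all three parts from Corollary \ref{sys} by checking, under each hypothesis, that $\mathcal{M}$ is a $(d-1)$-syzygy $\widehat{R}$-module; once this is established Corollary \ref{sys} supplies strong algebraicity directly, since $\mathcal{M}$ already has finite projective dimension.

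Cases (i) and (ii) are nearly immediate. When $d=1$, every finitely generated module is tautologically a $0$-syzygy, so (i) is just Corollary \ref{sys} verbatim. When $d=2$ and $\mathcal{M}$ is torsion-less, the defining embedding $\mathcal{M}\hookrightarrow \widehat{R}^{b}$ gives an exact sequence $0\to \mathcal{M}\to \widehat{R}^{b}\to \widehat{R}^{b}/\mathcal{M}\to 0$ whose cokernel is finitely generated since $\widehat{R}$ is noetherian; thus $\mathcal{M}$ is a $1$-syzygy, and Corollary \ref{sys} settles (ii).

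Case (iii) is the only step that requires a small argument, and I would handle it via the classical observation that a reflexive module is a $2$-syzygy. Choose a finite free presentation $\widehat{R}^{a}\to \widehat{R}^{b}\to \mathcal{M}^{\ast}\to 0$ of the dual, which exists since $\mathcal{M}^{\ast}$ is finitely generated over the noetherian ring $\widehat{R}$. Applying the left-exact functor $\Hom_{\widehat{R}}(-,\widehat{R})$ produces an exact sequence $0\to \mathcal{M}^{\ast\ast}\to \widehat{R}^{b}\to \widehat{R}^{a}$. Setting $\mathcal{N}:=\coker(\widehat{R}^{b}\to \widehat{R}^{a})$ and using the reflexivity isomorphism $\mathcal{M}\simeq \mathcal{M}^{\ast\ast}$, one extracts the exact sequence $0\to \mathcal{M}\to \widehat{R}^{b}\to \widehat{R}^{a}\to \mathcal{N}\to 0$, which displays $\mathcal{M}$ as a $2$-syzygy. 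With $d=3$, Corollary \ref{sys} again applies and $\mathcal{M}$ is strongly algebraic.

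No genuine obstacle appears: the corollary is essentially a dictionary translating the classical homological names \emph{torsion-less} and \emph{reflexive} into the syzygy-length condition that Corollary \ref{sys} requires. The reason this direct strategy stops at $d=3$ is precisely that for larger $d$ the property of being a $(d-1)$-syzygy is no longer captured by an elementary duality notion beyond reflexivity, so a new input (for example higher torsion-freeness in the sense of Auslander--Bridger) would be needed to continue the list.
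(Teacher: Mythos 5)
Your proposal is correct and follows essentially the same route as the paper: reduce each case to Corollary \ref{sys} by exhibiting $\mathcal{M}$ as a $(d-1)$-syzygy, using the embedding into a free module for torsion-less modules and the dualized presentation of $\mathcal{M}^{\ast}$ for reflexive ones. The only cosmetic difference is that for $d=1$ the paper reruns the finite-length $\Ext$ argument of Corollary \ref{sys} directly instead of invoking it as the $0$-syzygy case, which changes nothing of substance.
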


One may replace ii) with the following statement: If $d=2$, $\mathcal{M}$ is torsion-free and $\widehat{R}$ satisfies Serre's condition  $\R_1$ (resp. $\widehat{R}$ is an integral domain),
then $\mathcal{M}$ is strongly algebraic.

\begin{proof}
i)    We may assume that $\pd(\mathcal{M})=1$. In the light of Discussion \ref{sph}, we observe that
 $\dim(\Ext^1_{\widehat{R}}(\mathcal{M},\widehat{R}))=0$. Therefore, $\Ext^+_{\widehat{R}}(\mathcal{M},\widehat{R})$ is of finite length.
 Theorem \ref{Aus} yields the claim.

ii) Let $\widehat{R}^n\to \mathcal{M}^{\ast}\to 0$ be a presentation. Applying $\Hom_{\widehat{R}}(-,\widehat{R})$ to the presentation implies that $ 0\to \mathcal{M}^{\ast\ast}\to \widehat{R}^n$ is exact. By torsion-less, $\mathcal{M}\subset \mathcal{M}^{\ast\ast}$. Combine these to see
$\mathcal{M}$ is 1-syzygy.  It remains to use Corollary \ref{sys}.

iii) Let $\widehat{R}^{n}\to \widehat{R}^m\to \mathcal{M}^{\ast}\to 0$ be a finite presentation. Then $0\to\mathcal{ M}^{\ast\ast}\to \widehat{R}^m\to \widehat{R}^n$ is exact. Since $\mathcal{M}$ is reflexive
it follows that it is  second syzygy. Due to  Corollary \ref{sys} $\mathcal{M}$ is strongly algebraic.
\end{proof}

Let $R$ be a ring, $\mathfrak a$  an ideal with a generating set
 $\underline{a}:=a_{1}, \ldots, a_{r}$.
 By $\HH_{\underline{a}}^{i}(M)$, we mean the $i$-th cohomology of the
\textit{$\check{C}ech$} complex of  a module $M$ with respect  to $\underline{a}$. This is independent of the choose
of the generating set. For simplicity, we denote it by $\HH_{\mathfrak a}^{i}(M)$.

\begin{corollary}\label{gcm}
Let  $\widehat{R}$ be
an analytically irreducible  local ring which  is Gorenstein over its punctured  spectrum. Let  $\mathcal{M}$ be
a generalized Cohen-Macaulay module and of finite projective dimension. If $\dim(\mathcal{M})=\dim \widehat{R}$, then  $\mathcal{M}$ is strongly algebraic.
In fact, $\mathcal{M}$ extended from a generalized Cohen-Macaulay $R$-module.
\end{corollary}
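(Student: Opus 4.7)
The plan is to reduce the claim to Theorem \ref{Aus}: since $\mathcal{M}$ already has finite projective dimension, it suffices to verify that $\Ext^i_{\widehat{R}}(\mathcal{M},\widehat{R})$ has finite length for every $i\geq 1$. I will establish finite length by showing that for any $\fp\in\Spec(\widehat{R})\setminus\{\widehat{\fm}\}$ the localization $\Ext^i_{\widehat{R}_\fp}(\mathcal{M}_\fp,\widehat{R}_\fp)$ vanishes for $i\geq 1$, so the support of $\Ext^{+}_{\widehat{R}}(\mathcal{M},\widehat{R})$ lies in $\{\widehat{\fm}\}$.

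Fix such a $\fp$. By assumption $\widehat{R}_\fp$ is Gorenstein, hence Cohen-Macaulay. Since $\widehat{R}$ is analytically irreducible, it is a complete local domain, so catenary and equidimensional; this yields the dimension formula $\dim(\widehat{R}_\fp)+\dim(\widehat{R}/\fp)=d$. Combining this with the standard punctured-spectrum characterization of generalized Cohen-Macaulay modules, together with the hypothesis $\dim(\mathcal{M})=d$, I get that $\mathcal{M}_\fp$ is Cohen-Macaulay with $\dim(\mathcal{M}_\fp)=d-\dim(\widehat{R}/\fp)=\dim(\widehat{R}_\fp)$. Hence $\mathcal{M}_\fp$ is a maximal Cohen-Macaulay module of finite projective dimension over the Gorenstein local ring $\widehat{R}_\fp$. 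Auslander-Buchsbaum then forces $\pd_{\widehat{R}_\fp}(\mathcal{M}_\fp)=\depth(\widehat{R}_\fp)-\depth(\mathcal{M}_\fp)=0$, so $\mathcal{M}_\fp$ is free and $\Ext^{i}_{\widehat{R}_\fp}(\mathcal{M}_\fp,\widehat{R}_\fp)=0$ for all $i\geq 1$. Theorem \ref{Aus} now delivers the strong algebrization of $\mathcal{M}$.

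For the final sentence, pick any $M\in\Mod(R)$ with $\widehat{M}\simeq\mathcal{M}$; I would verify $M$ itself is generalized Cohen-Macaulay. Faithful flatness gives $\dim M=\dim\widehat{M}=d$, while flat base change along $R\to\widehat{R}$ yields $\HH^{i}_{\fm}(M)\otimes_R\widehat{R}\simeq \HH^{i}_{\widehat{\fm}}(\widehat{M})$. Since local cohomology modules are Artinian and Discussion \ref{art}(ii) identifies any Artinian $R$-module canonically with its $\widehat{R}$-completion, finiteness of length of $\HH^{i}_{\widehat{\fm}}(\widehat{M})$ in degrees $i<d$ transfers back to $\HH^{i}_{\fm}(M)$. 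Together with $\dim M=d$, this yields the generalized Cohen-Macaulay property for $M$.

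The main obstacle is the identification of $\mathcal{M}_\fp$ as a maximal Cohen-Macaulay module over $\widehat{R}_\fp$: this hinges on the dimension equality $\dim(\mathcal{M}_\fp)=\dim(\widehat{R}_\fp)$, which requires both that $\widehat{R}$ is equidimensional (supplied by analytic irreducibility) and that $\mathcal{M}$ attains the full dimension $d$. Without the analytic irreducibility hypothesis the catenary/equidimensional formula on the punctured spectrum could fail, $\mathcal{M}_\fp$ need not be MCM over $\widehat{R}_\fp$, and the Auslander-Buchsbaum reduction to freeness would break down.
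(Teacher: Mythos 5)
Your proposal is correct and follows essentially the same route as the paper: both reduce to Theorem \ref{Aus} by showing that the generalized Cohen--Macaulay hypothesis, together with $\dim(\mathcal{M})=\dim\widehat{R}$ and the domain (hence catenary, equidimensional) structure of $\widehat{R}$, forces $\mathcal{M}_\fp$ to be maximal Cohen--Macaulay over $\widehat{R}_\fp$ for every $\fp$ in the punctured spectrum, whence $\Ext^{+}_{\widehat{R}}(\mathcal{M},\widehat{R})$ has finite length; the descent of the generalized Cohen--Macaulay property to $M$ is also handled identically via flat base change of local cohomology. The only (cosmetic) divergence is the last step on the punctured spectrum: you deduce $\Ext$-vanishing from freeness of $\mathcal{M}_\fp$ via Auslander--Buchsbaum, while the paper uses local duality over the Gorenstein localizations.
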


\begin{proof}
We use a trick taken from \cite{gcm}.
Set $\f_{\fa}(\mathcal{N}):=\inf\{i:\HH^{i}_{\fa}(\mathcal{N}) \text{ is not finitely
generated}\}$.   Recall that $\widehat{R}$ is a quotient of a regular  ring.  By Grothendieck's finiteness theorem  $$\f_{\fa}(\mathcal{M})=\inf\{\depth(\mathcal{M}_{\fq})+\Ht(\frac{\fa+\fq}{\fq}):\fq\in\Supp(\mathcal{M})\setminus\V(\fa)\}.$$Then, $\dim \widehat{R}=\f_{\fm_{\widehat{R}}}(\mathcal{M}).$
Since $\dim(\mathcal{M})=\dim \widehat{R}$ and   $\widehat{R}$ is domain, we see that $\Spec(\widehat{R})=\Supp(\mathcal{M})$.
Let $\fq\in\Spec(\widehat{R})\setminus\V(\fm_{\widehat{R}})$. Then
 $\fq\in\Supp(\mathcal{M})\setminus\V(\fm_{\widehat{R}})$ and so $$\dim \widehat{R}\leq\depth(\mathcal{M}_{\fq}) +\dim(\frac{\widehat{R}}{\fq})\leq\dim(\mathcal{M}_{\fq}) +\Ht(\frac{\widehat{R}}{\fq})\leq\Ht(\fq) +\Ht(\frac{\widehat{R}}{\fq})\leq\dim \widehat{R}.$$
We conclude that $\depth(\mathcal{M}_{\fq})=\dim(\widehat{R}_{\fq})$. We combine this and the
  Gorenstein property of $\widehat{R}_{\fq}$ along with the local duality to see $\Ext^{i}_{\widehat{R}}(\mathcal{M},\widehat{R})_{\fq}\simeq\Ext^{i}_{\widehat{R}_{\fq}}(\mathcal{M}_{\fq},\widehat{R}_{\fq})\simeq\HH^{\dim\widehat{R}_{\fq}-i}_{\fq\widehat{R}_{\fq}}(\mathcal{M}_{\fq})^v=0$ for all $i>0$.
From this we conclude that $\Ext^{+}_{\widehat{R}}(\mathcal{M},\widehat{R})$ is of finite length.
We deduce from   Theorem \ref{Aus} that $\mathcal{M}$ is strongly algebraic.
Let $M$ be such that $\widehat{M}=\mathcal{M}$. To show $M$ is generalized Cohen-Macaulay, its enough to note
that $\HH^{i}_{\widehat{\fm}}(\mathcal{M})\simeq\HH^{i}_{\fm}(M)\otimes_R\widehat{R}\simeq\HH^{i}_{\fm}(M)$ for all $i<\dim M$.
\end{proof}

\begin{remark} In Corollary \ref{gcm}, the assumption $\dim(\mathcal{M})=\dim \widehat{R}$ is important even over  regular rings:   We look at $R:=\mathbb{Q}[X,Y]_{(X,Y)}$.
By Proposition \ref{bm2}   there is a height-one prime ideal $\fp$ such that $\mathcal{M}:=\widehat{R}/ \fp$ is not algebraic. Since $\fp$
is prime, $\HH^0_{(X,Y)}(\mathcal{M})=0$  and so $\mathcal{M}$ is (generalized) Cohen-Macaulay. Note that $\pd(\mathcal{M})=\dim(\mathcal{M})=1<2=\dim \widehat{R}$.
\end{remark}

Recall that an $R$-module $M$  of  projective dimension  $r$ is
called   $r$-spherical if $\Ext^i_{R}(M,R)=0$ for all $i\neq 0,r$. Now, let $\mathcal{M}$ be an $r$-spherical
complete module such that  $\Ext^r_{\widehat{R}}(\mathcal{M},\widehat{R})$ is algebraic. In view of
\cite[Proposition I. 5.8]{ps} $\mathcal{M}$ is strongly algebraic. Now,
let $\mathcal{M}$ be a  complete module of finite projective dimension such that
$\Ext^i_{\widehat{R}}(\mathcal{M},\widehat{R})$ is algebraic for all $i>0$. When is $\mathcal{M}$  algebraic?

Assume that $R_0=k$ is a field of characteristic zero. Let $R = \bigoplus_{i\geq0} R_i$
be a graded k-algebra.  Here, by completion we mean completion with respect to the graded maximal ideal. Let $\mathcal{M}\in\mod(\widehat{R})$ be such that
$\Ext^1_{\widehat{R}}(\mathcal{M}, \mathcal{M}) = 0$. In view of \cite[Proposition 6.1]{kmv} $\mathcal{M}$ is  completion of a finitely generated graded $R$-module
$M$. The graded structure on $R$ is important. To find an example, again we revisit  $R_0:=\mathbb{C}[x, y ]/(y^2 - x^3 - x^2)$.
Recall that $\widehat{R}_0:=\mathbb{C}[[u,v]]/(uv)$ and  $\Ext^1_{\widehat{R}_0}(\widehat{R}_0/(u), \widehat{R}_0/(u)) = 0$.
We ask:\begin{question}
When algebrization depends on the characteristic?
\end{question}

\section{Algerization of formal regular functions}

In this section  $(\mathcal{R},\fm)$ is a complete local ring, and $\fa\lhd \mathcal{R} $. In fact, we only need $\mathcal{R}$ is  complete with respect to $\fa$-adic topology. Let $\mathcal{X}:=\Spec(\mathcal{R})\setminus \{\fm \}$ and $(\widehat{\mathcal{X}},\mathcal{O}_{\widehat{\mathcal{X}}})$ denote the formal
completion of $\mathcal{X}$ along  with $\mathcal{Y}:=\V(\fa)\setminus\{\fm\}$. Recall that
$\mathcal{Y}$ is called $\Gg$ in $\mathcal{X}$ if  $\HH^0(\mathcal{X},\mathcal{O}_{\mathcal{X}})\stackrel{\simeq}\lo \HH^0(\widehat{\mathcal{X}}, \mathcal{O}_{\widehat{\mathcal{X}}})$.
This means that formal regular functions are the ordinary regular functions.

\begin{lemma}
 \label{41} Adopt the above notation.
The following holds:
\begin{enumerate}
\item[i)] If $\Ht(\fa)=\dim \mathcal{R}-1$, then $\HH^0(\widehat{\mathcal{X}},\mathcal{O}_{\widehat{\mathcal{X}}})\simeq(\mathcal{R}_x)^{\wedge_{\fa}}$  for some $x\in\fm$.
\item[ii)]  Assume in addition to  i) that  $\mathcal{R}$ is normal. Then $\HH^0(\widehat{\mathcal{X}},\mathcal{O}_{\widehat{\mathcal{X}}})$  is not finitely generated over $\mathcal{R}$.
\item[iii)] The module $\HH^0(\mathcal{X},\mathcal{O}_\mathcal{X})$ is algebraic over $\mathcal{R}$ provided it is finitely generated over $\mathcal{R}$.
\end{enumerate}
\end{lemma}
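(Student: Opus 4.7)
The plan is to reduce all three statements to concrete facts about the affine ring $(\mathcal{R}_x)^{\wedge_\fa}$ for a suitably chosen $x\in\fm$, and then attack (ii) via finite-generation of a localization and (iii) via flat base change applied to the standard four-term punctured-spectrum sequence.

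For (i), since $\Ht(\fa)=\dim\mathcal{R}-1$ the minimal primes $\fp_1,\ldots,\fp_s$ of $\fa$ all have height strictly less than $\dim\mathcal{R}=\Ht(\fm)$, so $\fm\not\subseteq\bigcup_i\fp_i$. Prime avoidance then delivers $x\in\fm\setminus\bigcup_i\fp_i$, and for such $x$ every $\fp\in\mathcal{Y}$ fails to contain $x$, so $\mathcal{Y}\subseteq D(x)\subseteq\mathcal{X}$. Because the formal completion of a scheme along a closed subset depends only on an open neighborhood of that subset, I would identify $\widehat{\mathcal{X}}$ with the formal completion of the affine open $D(x)=\Spec(\mathcal{R}_x)$ along $V(\fa\mathcal{R}_x)$; by definition its ring of global sections is $\varprojlim_n\mathcal{R}_x/\fa^n\mathcal{R}_x=(\mathcal{R}_x)^{\wedge_\fa}$.

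For (ii), I would argue by contradiction: assume $(\mathcal{R}_x)^{\wedge_\fa}$ is finitely generated over $\mathcal{R}$. Using the standard identity $M^{\wedge_\fa}/\fa M^{\wedge_\fa}\simeq M/\fa M$ applied to $M=\mathcal{R}_x$ (a finitely generated module over itself), the quotient $(\mathcal{R}/\fa)_{\bar x}\simeq\mathcal{R}_x/\fa\mathcal{R}_x$ must be finitely generated over the local ring $\mathcal{R}/\fa$. The choice of $x$ in (i) places $\bar x$ outside every minimal prime of $\fa$, so $\bar x$ is not nilpotent in $\mathcal{R}/\fa$; simultaneously $\bar x\in\fm/\fa$ makes it a non-unit there. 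Taking a finite generating set for $(\mathcal{R}/\fa)_{\bar x}$ with common denominator $\bar x^N$ and expressing $1/\bar x^{N+1}$ in their span yields, after clearing denominators, an equation of the form $\bar x^m(1-\bar x b)=0$ in $\mathcal{R}/\fa$; since $\mathcal{R}/\fa$ is local and $\bar x b\in\fm/\fa$, the element $1-\bar x b$ is a unit, which forces $\bar x^m=0$ and contradicts non-nilpotence. Normality of $\mathcal{R}$ enters to anchor the underlying dimensional setup, in particular ensuring $\mathcal{R}$ is a domain and $\dim(\mathcal{R}/\fa)=1$ via catenarity.

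For (iii), let $R$ be a Noetherian local ring with $\widehat{R}=\mathcal{R}$ (such an $R$ is given in the ambient setup of the paper; in the trivial case one may take $R=\mathcal{R}$). Applying flat base change along $R\to\mathcal{R}$ to the four-term punctured-spectrum sequence
\[
0\to\HH^0_{\fm_R}(R)\to R\to\HH^0(\mathcal{X}_R,\mathcal{O}_{\mathcal{X}_R})\to\HH^1_{\fm_R}(R)\to 0
\]
yields $\HH^0(\mathcal{X},\mathcal{O}_{\mathcal{X}})\simeq\HH^0(\mathcal{X}_R,\mathcal{O}_{\mathcal{X}_R})\otimes_R\mathcal{R}$. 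If the left-hand side is finitely generated over $\mathcal{R}$, the analogous sequence over $\mathcal{R}$ makes $\HH^1_\fm(\mathcal{R})$ finitely generated; being $\fm$-torsion, this module already coincides with $\HH^1_{\fm_R}(R)$ as $R$-modules, so $\HH^1_{\fm_R}(R)$ is finitely generated over $R$, and returning to the first exact sequence forces $\HH^0(\mathcal{X}_R,\mathcal{O}_{\mathcal{X}_R})$ to be finitely generated over $R$. Then $\HH^0(\mathcal{X},\mathcal{O}_{\mathcal{X}})$ is the $\fm$-adic completion of this finitely generated $R$-module, hence algebraic. The main obstacle I anticipate is the bookkeeping in (ii): one must track carefully that the non-nilpotence of $\bar x$ genuinely follows from the prime avoidance in (i) (the essential point is that $x$ is chosen outside the minimal primes of $\fa$, not merely those of $\mathcal{R}$), and that the identification $(\mathcal{R}_x)^{\wedge_\fa}/\fa\simeq(\mathcal{R}/\fa)_{\bar x}$ is legitimate despite $\mathcal{R}_x$ not being finitely generated over $\mathcal{R}$, the point being that $\fa$-adic completion commutes with quotient by $\fa$ when applied to $\mathcal{R}_x$ as a module over itself.
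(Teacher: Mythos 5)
Your proposal is correct, and while part (i) is essentially the paper's computation in different clothing, parts (ii) and (iii) follow genuinely different routes. For (i) the paper picks $x$ so that $x+\fa$ is a parameter for the one-dimensional ring $\mathcal{R}/\fa$ and identifies $\HH^0(\mathcal{X},\widetilde{\mathcal{R}/\fa^n})$ with the ideal transform $\DD_{(x)}(\mathcal{R}/\fa^n)\simeq \mathcal{R}_x/\fa^n\mathcal{R}_x$ before passing to the limit; your choice of $x$ outside the minimal primes of $\fa$ (which, since $\Ht(\fa)=d-1$, are exactly the points of $\mathcal{Y}$) and your restriction to the affine neighborhood $D(x)$ compute the same limit. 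For (ii) the paper argues that a finitely generated $(\mathcal{R}_x)^{\wedge_\fa}$ would make its submodule $\mathcal{R}_x$ finitely generated, hence $1/x$ integral over $\mathcal{R}$, and then invokes normality to force $1/x\in\mathcal{R}$; this tacitly uses that $\mathcal{R}_x\to(\mathcal{R}_x)^{\wedge_\fa}$ is injective, which holds because $\mathcal{R}$ is a domain and $\fa\mathcal{R}_x$ is proper. Your reduction modulo $\fa$ (legitimate, as you note, because $(\mathcal{R}_x)^{\wedge_\fa}$ is the $\fa\mathcal{R}_x$-adic completion of the noetherian ring $\mathcal{R}_x$, so $(\mathcal{R}_x)^{\wedge_\fa}/\fa(\mathcal{R}_x)^{\wedge_\fa}\simeq(\mathcal{R}/\fa)_{\bar x}$) together with the elementary fact that $A_t$ finitely generated over a local ring $A$ forces $t^m(1-tb)=0$, hence $t$ nilpotent, avoids both the injectivity issue and the integral-closure step; in particular your argument shows the normality hypothesis in (ii) is not actually needed, only that $x$ avoids the minimal primes of $\fa$. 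For (iii) the paper splices the four-term sequence into two short exact sequences, shows each end term is algebraic, and descends the extension class through $\Ext^1_{\widehat R}(\widehat H,\widehat M)\simeq\Ext^1_R(H,M)$ via Yoneda plus the 5-lemma; you instead descend finite generation through the faithfully flat map $R\to\widehat R$ (using that local cohomology and sections over the punctured spectrum commute with this base change) and then observe that $\HH^0(\mathcal{X},\mathcal{O}_{\mathcal{X}})$ is literally the completion of $\HH^0(\mathcal{X}_R,\mathcal{O}_{\mathcal{X}_R})$. Your route is cleaner in that it bypasses the Ext bookkeeping entirely; the paper's route has the advantage of being the template reused elsewhere in the text (Remark 2.2(ii), Proposition 3.7) for gluing algebraic modules along extensions.
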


\begin{proof}
i) Under some extra assumptions, the desired claim is in \cite{call}.
Indeed, by $\DD_I(-)$ we mean the ideal transformation, and recall
that if $I=(a)$ is principal, it is just the localization. Now,
let $x+\fp$  be a parameter element for the 1-dimensional ring $\frac{\mathcal{R}}{\fa}$, so  that $\rad(\fa+x\mathcal{R})=\fm$.
By the theorem on formal functions, $$\HH^0(\widehat{\mathcal{X}},\mathcal{O}_{\widehat{\mathcal{X}}})\simeq
{\vpl}_n \HH^0(\mathcal{X},\widetilde{\mathcal{R}/ \fa^n})=:{\vpl}_n \DD_{\frac{(x)+\fa^n}{\fa^n}}(\frac{\mathcal{R}}{\fa^n})\simeq{\vpl}_n (\frac{\mathcal{R}_x}{\fa^n\mathcal{R}_x}) =(\mathcal{R}_x)^{\wedge_{\fa}}.$$

ii)
Suppose  on the contrary that $\HH^0(\widehat{\mathcal{X}},\mathcal{O}_{\widehat{\mathcal{X}}})$  is  finitely generated.
Then $(\mathcal{R}_x)^{\wedge_{\fa}}$  is  finitely generated. Hence, its submodule $\mathcal{R}_x$ is as well.
Thus, $1/x$ is integral over $\mathcal{R}$. Since $\mathcal{R}$ is normal, $1/x\in\mathcal{R}$. This contradicts the fact that
$x\in\fm.$

iii) Let $R$ be such that $\widehat{R}\simeq\mathcal{R}$.
There is an  exact sequence $$0\lo\Gamma_{\fm}(\mathcal{R})
\lo\mathcal{R}\lo \HH^0(\mathcal{X},\mathcal{O}_\mathcal{X})\lo \HH^1_{\fm}(\mathcal{R})\lo 0\quad(\ast)$$
Recall that $\Gamma_{\fm}(\mathcal{R})$ is algebraic. Clearly, $\mathcal{R}$ is algebraic.  Also, $\Hom_{\mathcal{R}}(\Gamma_{\fm}(\mathcal{R}),\mathcal{R})$ is of finite length. In particular, $\Hom_{\mathcal{R}}(\Gamma_{\fm}(\mathcal{R}),\mathcal{R})$ is finitely
generated as an $R$-module. Set $\overline{\mathcal{R}}:=\frac{\mathcal{R}}{\Gamma_{\fm}(\mathcal{R})}$ and look at $0\to\Gamma_{\fm}(\mathcal{R})
\to\mathcal{R}\to \overline{\mathcal{R}}\to 0 $.
In view of \cite[Proposition 3.2(ii)]{sean} we see $\overline{\mathcal{R}}$ is algebraic. Suppose it extends
from $M$.
Since $\HH^0(\mathcal{X},\mathcal{O}_\mathcal{X})$ is finitely generated, we see $\HH^1_{\fm}(\mathcal{R})$ is finitely generated.
From this we deduce that $\HH^1_{\fm}(\mathcal{R})$ is of finite length. Due to Discussion \ref{art}
 $\HH^1_{\fm}(\mathcal{R})$ is algebraic. Suppose it extended from $H$.
Now, we look at $\mathcal{D}:0\to\overline{\mathcal{R}}
\to\HH^0(\mathcal{X},\mathcal{O}_\mathcal{X})\to \HH^1_{\fm}(\mathcal{R})\to 0 .$
Then $\mathcal{D}\in \Ext^1_{\mathcal{R}}(\widehat{H},\widehat{M})\simeq
\Ext^1_{R}(H,M)\otimes_R\widehat{R}\simeq\Ext^1_{R}(H,M)$ corresponds
to completion of $0\to M\to D\to H\to 0 $ where $D$ is a finitely generated $R$-module. By 5-lemma,
 $\widehat{D}\simeq \HH^0(\mathcal{X},\mathcal{O}_\mathcal{X}) $. So,
 $\HH^0(\mathcal{X},\mathcal{O}_\mathcal{X})$ is algebraic.
\end{proof}

\begin{corollary}\label{ng1}Let $(\mathcal{R},\fm)$ be a complete normal  local domain of dimension $d\geq2$ and let $\fa$ be  of height  $d-1$. Then  $\mathcal{Y}$ is not $\Gg$ in $\mathcal{X}$.
\end{corollary}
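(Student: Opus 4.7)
The plan is to compare the two sides of the natural map $\HH^0(\mathcal{X},\mathcal{O}_\mathcal{X}) \to \HH^0(\widehat{\mathcal{X}},\mathcal{O}_{\widehat{\mathcal{X}}})$: the right-hand side will be shown not finitely generated, while the left-hand side will turn out to be $\mathcal{R}$ itself, and hence finitely generated. Such a map can never be an isomorphism, so $\mathcal{Y}$ cannot be $\Gg$ in $\mathcal{X}$.

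First I would invoke Lemma \ref{41}(i), which applies since $\Ht(\fa)=d-1=\dim\mathcal{R}-1$, to rewrite
$$\HH^0(\widehat{\mathcal{X}},\mathcal{O}_{\widehat{\mathcal{X}}}) \simeq (\mathcal{R}_x)^{\wedge_{\fa}}$$
for a suitable $x\in\fm$. Then Lemma \ref{41}(ii), whose hypotheses (normality and the height condition) are exactly in force here, tells me that this module is not finitely generated over $\mathcal{R}$.

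Next I would analyze $\HH^0(\mathcal{X},\mathcal{O}_{\mathcal{X}})$ via the standard four-term sequence
$$0\lo \Gamma_{\fm}(\mathcal{R}) \lo \mathcal{R} \lo \HH^0(\mathcal{X},\mathcal{O}_{\mathcal{X}}) \lo \HH^1_{\fm}(\mathcal{R})\lo 0$$
used in Lemma \ref{41}(iii). Since $\mathcal{R}$ is a normal local domain of dimension $d\geq 2$, it satisfies Serre's condition $\Se_2$; in particular $\depth(\mathcal{R})\geq 2$, which forces $\Gamma_{\fm}(\mathcal{R})=0$ and $\HH^1_{\fm}(\mathcal{R})=0$. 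The exact sequence collapses to $\HH^0(\mathcal{X},\mathcal{O}_{\mathcal{X}})\simeq \mathcal{R}$, a finitely generated $\mathcal{R}$-module.

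Finally, if $\mathcal{Y}$ were $\Gg$ in $\mathcal{X}$, the canonical comparison map would give $\HH^0(\widehat{\mathcal{X}},\mathcal{O}_{\widehat{\mathcal{X}}})\simeq\mathcal{R}$, which would make $(\mathcal{R}_x)^{\wedge_{\fa}}$ finitely generated, contradicting Lemma \ref{41}(ii). The only genuinely delicate ingredient is the appeal to normality plus $d\geq 2$ to kill both $\Gamma_{\fm}(\mathcal{R})$ and $\HH^1_{\fm}(\mathcal{R})$; everything else is a direct citation of Lemma \ref{41}.
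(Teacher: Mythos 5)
Your proposal is correct and follows exactly the paper's own argument: normality gives $\Se_2$, hence with $d\geq 2$ the modules $\Gamma_{\fm}(\mathcal{R})$ and $\HH^1_{\fm}(\mathcal{R})$ vanish and the sequence $(\ast)$ of Lemma \ref{41}(iii) collapses to $\HH^0(\mathcal{X},\mathcal{O}_{\mathcal{X}})\simeq\mathcal{R}$, while Lemma \ref{41}(ii) shows the formal side is not finitely generated. Nothing to add.
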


\begin{proof}
Normal rings are $\Se_2$. Apply this along with $d\geq2$ to see
 $\Gamma_{\fm}(\mathcal{R})=\HH^1_{\fm}(\mathcal{R})=0$. In view of $(\ast)$ in Lemma \ref{41}(iii) we see $\HH^0(\mathcal{X},\mathcal{O}_{\mathcal{X}})=\mathcal{R}$.  Due to
Lemma \ref{41}(ii) $\HH^0(\widehat{\mathcal{X}}, \mathcal{O}_{\widehat{\mathcal{X}}})$   is not finitely generated over $\mathcal{R}$.
This implies that $\HH^0(\widehat{\mathcal{X}}, \mathcal{O}_{\widehat{\mathcal{X}}})\ncong\HH^0(\mathcal{X},\mathcal{O}_{\mathcal{X}})$. By definition, $\mathcal{Y}$ is not $\Gg$ in $\mathcal{X}$.
\end{proof}

\begin{discussion}Let $\mathcal{N}$ be a module.
\begin{enumerate}
\item[i)] The cohomological dimension of $\mathcal{N}$
with respect to $\fa$  $\cd(\fa,  \mathcal{N})$ is  the supremum of $i$'s such that $\HH^i_{\fa}(\mathcal{N})\neq 0$.  We use $\cd(\fa)$ instead of $\cd(\fa,  \mathcal{R})$.
\item[i)]Following Peskine and Szpiro,   formal grade  of $\mathcal{N}$ with respect to $\fa$ is the least integer $i$ such that ${\vpl}_n\HH^i_{\fm}(\mathcal{N}/\fa^n \mathcal{N})\neq 0$ and denoted  by $\fgrade(\fa,\mathcal{N})$.\end{enumerate}\end{discussion}

\begin{observation}\label{connec} Let $(\mathcal{R},\fm)$ be a  Cohen-Macaulay local  ring of dimension $d>1$ and complete  with respect to $\fa$-adic topology.   Then $\mathcal{Y}$ is $\Gg$ in $\mathcal{X}$ if and only if $\cd(\fa)\leq d-2$.
\end{observation}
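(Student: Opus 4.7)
The plan is to translate ``$\mathcal{Y}$ is $\Gg$ in $\mathcal{X}$'' into the formal-grade inequality $\fgrade(\fa,\mathcal{R})\geq 2$ and then invoke the Cohen--Macaulay identity $(\ast)$ recalled in the introduction.

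First, since $\mathcal{R}$ is Cohen--Macaulay of dimension $d\geq 2$, $\depth(\mathcal{R})\geq 2$ forces $\Gamma_{\fm}(\mathcal{R})=\HH^{1}_{\fm}(\mathcal{R})=0$, so the four-term sequence $(\ast)$ used in the proof of Lemma~\ref{41}(iii) collapses to $\HH^{0}(\mathcal{X},\mathcal{O}_{\mathcal{X}})=\mathcal{R}$. Hence the condition ``$\mathcal{Y}$ is $\Gg$ in $\mathcal{X}$'' becomes: the canonical map $\mathcal{R}\to \HH^{0}(\widehat{\mathcal{X}},\mathcal{O}_{\widehat{\mathcal{X}}})$ is an isomorphism.

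Second, by the theorem on formal functions (as in Lemma~\ref{41}(i)), $\HH^{0}(\widehat{\mathcal{X}},\mathcal{O}_{\widehat{\mathcal{X}}})=\vpl_{n}\HH^{0}(\mathcal{X},\widetilde{\mathcal{R}/\fa^{n}})$. Applying the same four-term sequence to each $\mathcal{R}/\fa^{n}$ and using $\fa$-adic completeness, so that $\vpl\mathcal{R}/\fa^{n}=\mathcal{R}$, the kernel and cokernel of $\mathcal{R}\to \HH^{0}(\widehat{\mathcal{X}},\mathcal{O}_{\widehat{\mathcal{X}}})$ are governed by $\vpl\Gamma_{\fm}(\mathcal{R}/\fa^{n})$ and $\vpl\HH^{1}_{\fm}(\mathcal{R}/\fa^{n})$. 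By the very definition of formal grade, both of these vanish precisely when $\fgrade(\fa,\mathcal{R})\geq 2$. Since $\mathcal{R}$ is Cohen--Macaulay, the identity $(\ast)$ reads $\fgrade(\fa,\mathcal{R})+\cd(\fa)=d$, so $\fgrade(\fa,\mathcal{R})\geq 2$ is equivalent to $\cd(\fa)\leq d-2$, which finishes the equivalence.

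The main technical obstacle is the $\vpl^{1}$ contribution that inevitably appears when one splits the four-term sequence into two short exact sequences and takes inverse limits. I plan to handle this via Mittag--Leffler: the system $\{\mathcal{R}/\fa^{n}\}$ and its quotient by the $\fm$-torsion have surjective transition maps, while $\{\Gamma_{\fm}(\mathcal{R}/\fa^{n})\}$ and $\{\HH^{1}_{\fm}(\mathcal{R}/\fa^{n})\}$ are inverse systems of artinian modules whose image subsystems must stabilize. This ensures that the relevant $\vpl^{1}$ vanish, so vanishing of the ordinary inverse limits translates into the required isomorphism and, conversely, nonzero inverse limits genuinely obstruct injectivity or surjectivity of the map $\mathcal{R}\to \HH^{0}(\widehat{\mathcal{X}},\mathcal{O}_{\widehat{\mathcal{X}}})$.
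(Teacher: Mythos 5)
Your proof is correct and follows essentially the same route as the paper: reduce $\Gg$ to the vanishing of $\vpl_n\Gamma_{\fm}(\mathcal{R}/\fa^n)$ and $\vpl_n\HH^1_{\fm}(\mathcal{R}/\fa^n)$ via the four-term sequence, identify this with $\fgrade(\fa,\mathcal{R})\geq 2$, and conclude by the Cohen--Macaulay identity $\fgrade(\fa,\mathcal{R})+\cd(\fa)=d$. Your explicit Mittag--Leffler treatment of the $\vpl^1$ terms is a point the paper delegates to the cited exact sequence in Peskine--Szpiro, so it is a welcome (and correct) addition rather than a deviation.
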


\begin{proof}
Recall that $\depth(\mathcal{R})>1$. The depth condition implies that
$\HH^0(\mathcal{X},\mathcal{O}_{\mathcal{X}})\simeq \mathcal{R}$.
 We look at the following exact sequence (see \cite[\S III.2]{ps})$$
\begin{CD}
0 \to{\vpl}_n\Gamma_{\fm}(\frac{\mathcal{R}}{ \fa^n}) @>>>{\vpl}_n\frac{\mathcal{R}}{ \fa^n} @>>>{\vpl}_n\HH^0(\mathcal{X},\widetilde{\frac{\mathcal{R}}{ \fa^n}}) @>>>{\vpl}_n\HH^1_{\fm}(\frac{\mathcal{R}}{ \fa^n})\to0.
\end{CD}
$$Recall that ${\vpl}_n (\frac{\mathcal{R}}{ \fa^n})=\mathcal{R}$.
 Recall from  \cite[Corollary 4.2]{AD} that the Cohen-Macaulay property  of a nonzero module $\mathcal{N}$ is equivalent with the following property $$\fgrade(\fb,\mathcal{N})+\cd(\fb,\mathcal{N})=\dim(\mathcal{N})\quad \forall \fb\lhd\mathcal{R}\quad(\ast)$$
 Recall that $\Gg$ means that
the natural map $\varphi:\HH^0(\mathcal{X},\mathcal{O}_{\mathcal{X}})\lo \HH^0(\widehat{\mathcal{X}}, \mathcal{O}_{\widehat{\mathcal{X}}})$ is an isomorphism.
Note that $\ker(\varphi)={\vpl}_n\Gamma_{\fm}(\frac{\mathcal{R}}{ \fa^n})$ and $\coker(\varphi)={\vpl}_n\HH_{\fm}^1(\frac{\mathcal{R}}{ \fa^n})$. Thus  $\mathcal{Y}$ is  $\Gg$ in $\mathcal{X}$ if and only if $\fgrade(\fa,\mathcal{R})\geq2$.
In view of $(\ast)$ this holds if and only if
$\cd(\fa)\leq d-2$, as claimed.
\end{proof}

Let us extend the previous observation:

\begin{proposition}
Let $\mathcal{R}$ be a   local domain of dimension $d\geq3$ and $\mathcal{M}$ is torsion-free satisfies Serre's condition $\Se_r$.
Suppose  $\mathcal{R}$ is complete with respect to $\fa$-adic topology and   $\mathcal{R}$ is  homomorphic image of a Gorenstein ring.
If $\cd(\fa,\mathcal{M})\leq \dim\mathcal{M} -r$, then $\fgrade(\fa,\mathcal{M})\geq r$.
\end{proposition}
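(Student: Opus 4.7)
The plan is to convert $\fgrade(\fa, \mathcal{M}) \geq r$ into a vanishing statement for $\fa$-local cohomology of a duality complex, and then extract that vanishing from the Serre condition $\Se_r$ together with the bound $\cd(\fa, \mathcal{M}) \leq d-r$. Because $\mathcal{R}$ is complete and a homomorphic image of a Gorenstein ring, it admits a normalized dualizing complex $\omega_\mathcal{R}^\bullet$ with nonzero cohomologies concentrated in degrees $[-d,0]$, and local duality gives $\HH^i_\fm(\mathcal{N})^\vee \simeq \HH^{-i}(R\Hom_\mathcal{R}(\mathcal{N}, \omega_\mathcal{R}^\bullet))$ for any finite $\mathcal{R}$-module $\mathcal{N}$.

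First I would Matlis-dualize the definition of formal grade applied to the system $\{\mathcal{M}/\fa^n\mathcal{M}\}$ and pass to the colimit, obtaining
$\bigl(\varprojlim_n \HH^i_\fm(\mathcal{M}/\fa^n\mathcal{M})\bigr)^\vee \simeq \varinjlim_n \HH^{-i}(R\Hom_\mathcal{R}(\mathcal{M}/\fa^n\mathcal{M}, \omega_\mathcal{R}^\bullet))$.
A standard Koszul/cofinality argument identifies the right-hand colimit with $\HH^{-i}_\fa(R\Hom_\mathcal{R}(\mathcal{M}, \omega_\mathcal{R}^\bullet))$. Hence the desired conclusion $\fgrade(\fa, \mathcal{M}) \geq r$ is equivalent to the vanishing of $\HH^{-i}_\fa(R\Hom_\mathcal{R}(\mathcal{M}, \omega_\mathcal{R}^\bullet))$ for every $i < r$.

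Next I would rephrase the two hypotheses in this duality framework. Writing $K^{q} := \HH^{q}(R\Hom_\mathcal{R}(\mathcal{M}, \omega_\mathcal{R}^\bullet))$, the torsion-free assumption over the domain $\mathcal{R}$ forces $\dim \mathcal{M} = d$, so $K^{-d} = \Hom_\mathcal{R}(\mathcal{M}, \omega_\mathcal{R}^\bullet)$ is the ``canonical module'' of $\mathcal{M}$ and $K^q = 0$ for $q < -d$. Checking Serre's criterion locally at each prime via local duality, $\Se_r$ on $\mathcal{M}$ translates to the dimension estimates $\dim_\mathcal{R} K^{q} \leq -q - r$ for $-d < q \leq -d + r$ (and $K^q = 0$ above the natural range). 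The bound $\cd(\fa, \mathcal{M}) \leq d-r$ then transports, via duality, to $\cd(\fa, K^{-d}) \leq d - r$.

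Finally I would run the hypercohomology spectral sequence
$E_2^{p,q} = \HH^p_\fa(K^q) \Longrightarrow \HH^{p+q}_\fa(R\Hom_\mathcal{R}(\mathcal{M}, \omega_\mathcal{R}^\bullet))$
along the diagonals $p + q = -i$ with $i < r$. The top term $q = -d$ contributes $\HH^{d-i}_\fa(K^{-d})$, which vanishes in this range by the $\cd$-bound on $K^{-d}$ together with the inequality $d - i > d - r$. For intermediate $q$ with $-d < q < 0$, the dimension estimate $\dim K^q \leq -q - r$ coming from $\Se_r$ kills $\HH^p_\fa(K^q)$ as soon as $p > -q - r$, and along our diagonals one has $p = -i - q > -q - r$, so these contributions also vanish. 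The spectral sequence therefore degenerates into the desired zero in degrees $-i$ for $i < r$. The main obstacle is precisely the third step: one has to verify that the $\Se_r$-driven support bounds on each non-top $K^q$ dovetail exactly with the cohomological-dimension bound on the top piece $K^{-d}$, with no off-by-one slack. A safer route, if the bookkeeping turns out delicate, is induction on $r$: choose a non-zero-divisor $x \in \fm \setminus \fa$ so that $\mathcal{M}/x\mathcal{M}$ satisfies $\Se_{r-1}$ over $\mathcal{R}/(x)$ of dimension $d-1$ with $\cd(\fa, \mathcal{M}/x\mathcal{M}) \leq (d-1) - (r-1)$, and use Faltings' case $r = 2$ as the base of the induction.
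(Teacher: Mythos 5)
Your argument is correct in substance but takes a genuinely different route from the paper. The paper does not unwind the duality at all: it verifies the hypotheses of the Stacks Project algebraization package (Tags 0EFW and 0EFR) for the data $(\mathcal{R},\fa,\{\fm\},\mathcal{M},s=r-1,\ell=d-2)$ --- the dualizing complex from the Gorenstein-quotient hypothesis, the bound on $\cd$ from support-sensitivity of cohomological dimension together with $\Supp(\mathcal{M})=\Spec(\mathcal{R})$, and the depth inequality from torsion-freeness, $\Se_r$ and catenarity --- and then quotes the resulting isomorphism $\HH^i_{\fm}(\mathcal{M})\simeq\varprojlim_n\HH^i_{\fm}(\mathcal{M}/\fa^n\mathcal{M})$ for $i\leq r-1$, whose left side vanishes by $\Se_r$ at $\fm$. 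You instead reprove the comparison by hand via the duality $\varprojlim_n\HH^i_{\fm}(\mathcal{M}/\fa^n\mathcal{M})\simeq\Hom\bigl(\HH^{-i}_{\fa}(\mathbf{R}\Hom(\mathcal{M},\omega^{\bullet}_{\mathcal{R}})),E(\mathcal{R}/\fm)\bigr)$ and a hypercohomology spectral sequence on the deficiency modules $K^q$; this is essentially the method of the cited work of Asgharzadeh--Divaani-Aazar and of Schenzel on formal local cohomology, and it is more self-contained, at the cost of more bookkeeping. Two points need tightening. First, the identification $\varinjlim_n\HH^{-i}(\mathbf{R}\Hom(\mathcal{M}/\fa^n\mathcal{M},\omega^{\bullet}))\simeq\HH^{-i}_{\fa}(\mathbf{R}\Hom(\mathcal{M},\omega^{\bullet}))$ is not a formal cofinality statement, since $\mathcal{M}/\fa^n\mathcal{M}$ is the underived tensor product; either cite Schenzel's duality theorem for formal local cohomology or compare with $\mathcal{R}/\fa^n\otimes^{\mathbf{L}}\mathcal{M}$ in the colimit. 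Second, the translation of $\Se_r$ into $\dim K^q\leq -q-r$ holds for all $-d<q\leq 0$ (your stated range $-d<q\leq -d+r$ is garbled, though you apply the correct bound later) and requires $\mathcal{M}$ equidimensional over a catenary ring with $\Ht(\fp)+\dim(\mathcal{R}/\fp)=d$; here this is exactly what torsion-freeness over the domain $\mathcal{R}$ and the Gorenstein-quotient hypothesis provide. With those two references pinned down, your diagonal-by-diagonal vanishing closes with no off-by-one slack, as you anticipated.
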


\begin{proof}
Let $A$ be a ring  $I,\fb\lhd A$ and  $s,\ell\in \mathbb{N}$.
We say   $\left(A, I, T:=\V(\mathfrak b), M, s, \ell\right)$ satisfies the  property $\mathcal{P}$ if we are in the situation of \cite[Tag 0EFU]{Stack}.
Here, we collect the basic properties of $\mathcal{P}$:

\begin{enumerate}
\item[Fact] A): Let $I \subset \mathfrak b$ be ideals of a ring $A$.
Let $M$ be a finite $A$-module. Let $s$ and $\ell$ be integers.
Also, we assume: i)   $A$ has a dualizing complex,
ii)  $\cd(A, I) \leq \ell$,
iii) if $\mathfrak p \not \in \V(I)$ and
$\mathfrak q \in \V(\mathfrak p) \cap \V(\mathfrak b)$ then
$\depth_{A_\mathfrak p}(M_\mathfrak p) > s$ or
$\depth_{A_\mathfrak p}(M_\mathfrak p) +
\dim((A/\mathfrak p)_\mathfrak q) > \ell + s$.
Then $\left(A, I, \V(\mathfrak b), M, s, \ell\right)$ has   property $\mathcal{P}$ (see  \cite[Tag 0EFW]{Stack}).
\end{enumerate}

We look at  $A:=\mathcal{R}, I:=\fa, \V(\mathfrak b)=\fm, M:=\mathcal{R}, s:=r-1,$ and $ \ell:=d-2$. We   are going to show that the  property $\mathcal{P}$ holds for them.
Since $\mathcal{R}$ is  homomorphic image of a Gorenstein ring i)  holds. Since $\mathcal{M}$ is torsion-free, $0\in\Supp(\mathcal{M})$. Hence $\Supp(\mathcal{M})=\Spec(\mathcal{R})$.
Also, cohomological dimension is support sensitive, see \cite{DNT}. Part ii) follows by these.
To show iii), let $\mathfrak p \not \in \V(\fa)$ and
$\mathfrak q \in \V(\mathfrak p) \cap \V(\mathfrak \fm)=\{\fm\}$.
Suppose $\depth (\mathcal{M}_\mathfrak p) > r-1$ is not the case. That is $\depth (\mathcal{M}_\mathfrak p) \leq r-1$.
In the light of  $\depth(\mathcal{M}_\mathfrak p)\geq\min\{r,\dim(\mathcal{M}_\mathfrak p)\}$ we see $\dim(\mathcal{M}_\mathfrak p)=\depth(\mathcal{M}_\mathfrak p)$.
We may assume that $\fp\neq 0$.
Clearly,  $\mathcal{M}_{\fp}$  is torsion-free over $\mathcal{R}_{\fp}$.  Thus, $\Supp_{\mathcal{R}_{\fp}}(\mathcal{M}_{\fp})=\Spec(\mathcal{R}_{\fp})$.
Therefore, $\Ht(\fp)=\dim(\mathcal{M}_{\fp})=\depth(\mathcal{M}_\mathfrak p)$ and consequently
$$\depth (\mathcal{M}_\mathfrak p) +
\dim((\mathcal{R}/\mathfrak p)_\mathfrak q)=\Ht(\fp)+\dim(\mathcal{R}/\mathfrak p)\stackrel{(\natural)}=d > d-1=(d-2)+1=:\ell+ s.$$
Regarding $(\natural)$: we use the fact that any homomorphic image of a Gorenstein ring is  catenary. This clarifies iii).

\begin{enumerate}
\item[Fact]  B): Assume $\mathcal{P}$ holds over a local ring $(A,\fm)$ and $T = \{\mathfrak m\}$. Then
$\HH^i_\mathfrak m(M) \lo {\vpl}_n \HH^i_\mathfrak m(M/I^nM)$
is an isomorphism for $i \leq s$ (see \cite[Tag 0EFR]{Stack}).
\end{enumerate}

We apply Fact   B) to see that  ${\vpl}_n\HH^{<r}_{\fm }(\frac{\mathcal{M}}{ \fa^n\mathcal{M}})\simeq\HH^{<r}_{\fm }({\vpl}_n\frac{\mathcal{M}}{ \fa^n\mathcal{M}})\simeq\HH^{<r}_{\fm }(\mathcal{M})\stackrel{(\ast)}=0$ where $(\ast)$ follows from $\Se_r$.
By definition, $\fgrade(\fa,\mathcal{M})\geq r$.
\end{proof}

Let $\mathcal{F}$  be a sheaf associated to $\mathcal{M}$  over $\mathcal{X}$ and   $\widehat{\mathcal{F}}$ be  its formal completion along with  $\mathcal{Y}$.

\begin{corollary}(After Faltings)
Let $(\mathcal{R},\fm)$ be a   local domain of dimension $d\geq3$,  $\mathcal{M}$ be torsion-free and  $\Se_2$.
Suppose the following holds:
\begin{enumerate}
\item[i)]  $\mathcal{M}$ is complete with respect to $\fa$-adic topology,
\item[ii)]  $\mathcal{R}$ is  homomorphic image of a Gorenstein ring, and
\item[iii)] $\cd(\fa,\mathcal{M})\leq d-2$.
\end{enumerate}
 Then $\HH^0(\widehat{\mathcal{X}},\widehat{\mathcal{F}})\simeq\HH^0(\mathcal{X},\mathcal{F}).$
\end{corollary}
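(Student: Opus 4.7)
The plan is to simply combine the preceding Proposition (with $r=2$) with the four-term exact sequence that was already exploited in Observation \ref{connec}, now applied to the module $\mathcal{M}$ rather than to $\mathcal{R}$.

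First, I would apply the Proposition to $\mathcal{M}$ with $r=2$: since $\mathcal{M}$ is torsion-free, $\Se_2$, and $\cd(\fa,\mathcal{M})\leq d-2$, while $\mathcal{R}$ is a homomorphic image of a Gorenstein ring, we obtain $\fgrade(\fa,\mathcal{M})\geq 2$. By definition of formal grade this translates into
\[
{\vpl}_n\Gamma_{\fm}\!\Bigl(\frac{\mathcal{M}}{\fa^n\mathcal{M}}\Bigr)=0 \qquad\text{and}\qquad {\vpl}_n\HH^1_{\fm}\!\Bigl(\frac{\mathcal{M}}{\fa^n\mathcal{M}}\Bigr)=0.
\]

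Next, I would note that the $\Se_2$ condition together with $\dim\mathcal{M}=d\geq 3$ (which holds because $\mathcal{M}$ is torsion-free over a domain, so $\Supp(\mathcal{M})=\Spec(\mathcal{R})$) forces $\depth_{\fm}(\mathcal{M})\geq 2$, hence $\HH^0_{\fm}(\mathcal{M})=\HH^1_{\fm}(\mathcal{M})=0$. The low-degree local cohomology sequence then yields $\HH^0(\mathcal{X},\mathcal{F})\simeq \mathcal{M}$.

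Then I would write down the analogue of the four-term sequence from the proof of Observation \ref{connec}, applied to the inverse system $\{\mathcal{M}/\fa^n\mathcal{M}\}_n$:
\[
\begin{CD}
0\to {\vpl}_n\Gamma_{\fm}\!\bigl(\tfrac{\mathcal{M}}{\fa^n\mathcal{M}}\bigr) @>>> {\vpl}_n\tfrac{\mathcal{M}}{\fa^n\mathcal{M}} @>>> {\vpl}_n\HH^0(\mathcal{X},\widetilde{\tfrac{\mathcal{M}}{\fa^n\mathcal{M}}}) @>>> {\vpl}_n\HH^1_{\fm}\!\bigl(\tfrac{\mathcal{M}}{\fa^n\mathcal{M}}\bigr)\to 0.
\end{CD}
\]
Because $\mathcal{M}$ is assumed $\fa$-adically complete, the second term is $\mathcal{M}$, and by the theorem on formal functions the third term is $\HH^0(\widehat{\mathcal{X}},\widehat{\mathcal{F}})$. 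The vanishing of the outer terms (provided by Step 1) then identifies the natural map $\mathcal{M}\to \HH^0(\widehat{\mathcal{X}},\widehat{\mathcal{F}})$ as an isomorphism. Combining with $\HH^0(\mathcal{X},\mathcal{F})\simeq\mathcal{M}$ from Step 2 finishes the proof.

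The only delicate point I anticipate is verifying that the displayed four-term sequence remains exact after passing to the inverse limit in the module setting; this is where I would invoke Mittag-Leffler (the inverse system $\{\Gamma_{\fm}(\mathcal{M}/\fa^n\mathcal{M})\}_n$ consists of artinian modules, so $\vpl^1$ vanishes), exactly as was tacitly done for the ring $\mathcal{R}$ in the proof of Observation \ref{connec}. Everything else is a direct specialization of the preceding Proposition to $r=2$.
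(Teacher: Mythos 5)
Your argument is correct and follows essentially the same route as the paper: apply the preceding Proposition with $r=2$ to get $\fgrade(\fa,\mathcal{M})\geq 2$, kill the outer terms of the four-term sequence both for $\mathcal{M}$ itself (via $\Se_2$ and $\dim\mathcal{M}=d\geq 3$, giving $\depth\mathcal{M}\geq 2$) and for the inverse system, and identify the middle terms via completeness and the theorem on formal functions. Your attention to the Mittag--Leffler issue for $\{\Gamma_{\fm}(\mathcal{M}/\fa^n\mathcal{M})\}_n$ is a point the paper passes over silently, and it is resolved exactly as you indicate.
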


\begin{proof}By the above proposition $\fgrade(\fa,\mathcal{M})\geq 2$. We conclude from$$
\begin{CD}
0 ={\vpl}_n\Gamma_{\fm}(\frac{\mathcal{M}}{ \fa^n\mathcal{M}}) @>>>{\vpl}_n\frac{\mathcal{M}}{ \fa^n\mathcal{M}} @>>>{\vpl}_n\HH^0(\mathcal{X},\frac{\mathcal{F}}{ \fa^n\mathcal{F}}) @>>>{\vpl}_n\HH^1_{\fm }(\frac{\mathcal{M}}{ \fa^n\mathcal{M}})=0\\
  \simeq@AAA\simeq @  AAA @AAA\simeq @AAA   \\
0 =\Gamma_{\fm }(\mathcal{M}) @>>>\mathcal{M} @>>>\HH^0(\mathcal{X},\mathcal{F}) @>>>\HH^1_{\fm }(\mathcal{M})=0\\
\end{CD}
$$and the theorem of formal functions to deduce that $\HH^0(\widehat{\mathcal{X}},\widehat{\mathcal{F}})\simeq{\vpl}_n\HH^0(\mathcal{X},\frac{\mathcal{F}}{ \fa^n\mathcal{F}})\simeq\HH^0(\mathcal{X},\mathcal{F}).$
\end{proof}

We  found that the following is in  \cite{bhat}. The following proof is elementary:

 \begin{proposition}\label{bhde} Let $(\mathcal{R},\fm)$ be a  local  ring, $\fa:=(x)$ and
 $\mathcal{M}$  a finitely generated module with the following properties:
\begin{enumerate}
\item[i)]  $x$ is $\mathcal{M}$-regular,
\item[ii)]   $\mathcal{M}$ is complete with respect to $\fa$-adic topology, and
\item[iii)] $\HH^1_{\fm}(\mathcal{M})$ and $\HH^2_{\fm}(\mathcal{M})$ are annihilated by some power of $x$.
\end{enumerate}
 Then $\HH^0(\widehat{\mathcal{X}},\widehat{\mathcal{F}})\simeq\HH^0(\mathcal{X},\mathcal{F}).$
\end{proposition}

\begin{proof}After changing $x$ with its high powers we may assume that  $x\HH^1_{\fm}(M)=x\HH^2_{\fm}(M)=0$. Let $\theta_n:\frac{\mathcal{M}}{x^{n+1}\mathcal{M}}\to \frac{\mathcal{M}}{x^n\mathcal{M}}$ be the natural surjection. The following diagram:
$$
\begin{CD}
0  @>>>\mathcal{M}@>x^n>>\mathcal{M} @>>>\frac{\mathcal{M}}{x^n\mathcal{M}} @>>>0\\
 @. x@  AAA =@AAA \theta_n@AAA   \\
0@>>>\mathcal{M}@>x^{n+1}>>\mathcal{M} @>>>\frac{\mathcal{M}}{x^{n+1}\mathcal{M}} @>>>0\\
\end{CD}$$
 induces$$
\begin{CD}
\HH^1_{\fm}(\mathcal{M})  @>x^n>>\HH^1_{\fm}(\mathcal{M})@>>>\HH^1_{\fm}(\frac{\mathcal{M}}{x^n\mathcal{M}}) @>>>\HH^2_{\fm}(\mathcal{M}) @>x^n>>\HH^2_{\fm}(\mathcal{M})\\
 @. =@  AAA \pi_n@AAA x@AAA   \\
\HH^1_{\fm}(\mathcal{M})@>x^{n+1}>>\HH^1_{\fm}(\mathcal{M})@>>>\HH^1_{\fm}(\frac{\mathcal{M}}{x^{n+1}\mathcal{M}}) @>>>\HH^2_{\fm}(\mathcal{M}) @>x^{n+1}>>\HH^2_{\fm}(\mathcal{M}),\\
\end{CD}$$where $\pi_n:=\HH^1_{\fm}(\theta_n)$.
Since $x\HH^1_{\fm}(\mathcal{M})=x\HH^2_{\fm}(\mathcal{M})=0$ we have $$\begin{CD}
0  @>>>\HH^1_{\fm}(\mathcal{M})@>>>\HH^1_{\fm}(\frac{\mathcal{M}}{x^n\mathcal{M}}) @>>>\HH^2_{\fm}(\mathcal{M}) @>>>0\\
 @. =@  AAA \pi_n@AAA x@AAA   \\
0@>>>\HH^1_{\fm}(\mathcal{M})@>>>\HH^1_{\fm}(\frac{\mathcal{M}}{x^{n+1}\mathcal{M}}) @>>>\HH^2_{\fm}(\mathcal{M}) @>>>0.\\
\end{CD}$$Note that $\HH^2_{\fm}(\mathcal{M})\stackrel{x}\lo\HH^2_{\fm}(\mathcal{M})$ is a zero map. By Mittag-Leffler, we have $$
\begin{CD}
0  @>>>{\vpl}_n\left(\HH^1_{\fm}(\mathcal{M}),\id\right) @>>>{\vpl}_n\left(\HH^1_{\fm}(\frac{\mathcal{M}}{x^n\mathcal{M}}),\pi_n\right) @>>>{\vpl}_n\left(\HH^2_{\fm}(\mathcal{M}),zero\right) @>>>0.
\end{CD}$$Since ${\vpl}_n\left(\HH^2_{\fm}(\mathcal{M}),zero\right) =0$ we get that $$\HH^1_{\fm}(\mathcal{M})\simeq{\vpl}_n\left(\HH^1_{\fm}(\mathcal{M}),\id\right) \simeq{\vpl}_n\left(\HH^1_{\fm}(\frac{\mathcal{M}}{x^n\mathcal{M}}),\pi_n\right).$$
Similarly, $\HH^0_{\fm}(\mathcal{M}) \simeq{\vpl}_n\HH^0_{\fm}(\frac{\mathcal{M}}{x^n\mathcal{M}})$.  This is zero, since $\depth(\mathcal{M})>0$. Now, we put these in the following
diagram:$$
\begin{CD}
0 ={\vpl}_n\Gamma_{\fm}(\frac{\mathcal{M}}{ \fa^n\mathcal{M}}) @>>>{\vpl}_n\frac{\mathcal{M}}{ \fa^n\mathcal{M}} @>>>{\vpl}_n\HH^0(\mathcal{X},\frac{\mathcal{F}}{ \fa^n\mathcal{F}}) @>>>{\vpl}_n\HH^1_{\fm}(\frac{\mathcal{M}}{ \fa^n\mathcal{M}})@>>>0\\
 @. \simeq @  AAA @AAA\simeq @AAA   \\
0 =\Gamma_{\fm}(\mathcal{M}) @>>>\mathcal{M} @>>>\HH^0(\mathcal{X},\mathcal{F}) @>>>\HH^1_{\fm}(\mathcal{M})@>>>0.\\
\end{CD}
$$We apply 5-lemma to deduce that $\HH^0(\widehat{\mathcal{X}},\widehat{\mathcal{F}})\simeq{\vpl}_n\HH^0(\mathcal{X},\frac{\mathcal{F}}{ \fa^n\mathcal{F}})\simeq\HH^0(\mathcal{X},\mathcal{F}).$
\end{proof}

This implies that:
\begin{corollary}\label{GCM1} Let $(\mathcal{R},\fm)$ be a  generalized Cohen-Macaulay complete local  ring of dimension $>2$ and $\fa=(x)$ for some regular element $x$.
Then
$\mathcal{Y}$ is $\Gg$ in $\mathcal{X}$.
\end{corollary}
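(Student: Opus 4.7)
The plan is to apply the preceding Proposition with the module $\mathcal{M}:=\mathcal{R}$. All the work amounts to verifying that $\mathcal{R}$ meets the three hypotheses i), ii), iii) of that Proposition, after which the conclusion $\HH^0(\widehat{\mathcal{X}},\mathcal{O}_{\widehat{\mathcal{X}}})\simeq \HH^0(\mathcal{X},\mathcal{O}_{\mathcal{X}})$ is exactly the $\Gg$ property for $\mathcal{Y}$ in $\mathcal{X}$.

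Hypothesis i) is immediate: $x$ is $\mathcal{R}$-regular by assumption (and we may plainly assume $x\in\fm$, since otherwise $\fa=\mathcal{R}$ and $\mathcal{Y}=\emptyset$). For hypothesis ii) we have to pass from $\fm$-adic completeness (which is what ``complete local ring'' supplies in this section) to $\fa$-adic completeness. Since $\fa\subseteq\fm$, the $\fm$-adic topology refines the $\fa$-adic topology, so every $\fa$-Cauchy sequence in $\mathcal{R}$ is $\fm$-Cauchy and thus converges in $\mathcal{R}$. Moreover, $\fa^n$ is a finitely generated ideal of the Noetherian complete ring $\mathcal{R}$, so it is closed in the $\fm$-adic topology; hence the limit of representatives of an element of $\vpl_n\mathcal{R}/\fa^n$ really lies in the correct residue class modulo each $\fa^n$. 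This shows $\mathcal{R}\stackrel{\simeq}{\lo}\vpl_n\mathcal{R}/\fa^n$.

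The only content is in hypothesis iii). By hypothesis $\mathcal{R}$ is generalized Cohen--Macaulay of dimension $d>2$, so $\HH^i_{\fm}(\mathcal{R})$ has finite length for every $i<d$. In particular, both $\HH^1_{\fm}(\mathcal{R})$ and $\HH^2_{\fm}(\mathcal{R})$ are of finite length, hence are annihilated by some power of $\fm$. Since $x\in\fm$, some common power of $x$ annihilates both modules, which is exactly iii).

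With i), ii), iii) in place, the Proposition delivers $\HH^0(\widehat{\mathcal{X}},\mathcal{O}_{\widehat{\mathcal{X}}})\simeq \HH^0(\mathcal{X},\mathcal{O}_{\mathcal{X}})$, so $\mathcal{Y}$ is $\Gg$ in $\mathcal{X}$. No step is genuinely difficult; the nearest thing to an obstacle is the verification of $\fa$-adic completeness, which is a routine Cauchy-sequence argument relying on Noetherianness and the closedness of the ideals $\fa^n$.
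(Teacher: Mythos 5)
Your argument is correct and is exactly what the paper intends: Corollary \ref{GCM1} is stated as an immediate consequence of the preceding Proposition applied to $\mathcal{M}:=\mathcal{R}$, and your verifications of hypotheses i)--iii) (regularity of $x$, $\fa$-adic completeness following from $\fm$-adic completeness, and finite length of $\HH^1_{\fm}(\mathcal{R})$ and $\HH^2_{\fm}(\mathcal{R})$ from the generalized Cohen--Macaulay hypothesis with $\dim\mathcal{R}>2$) are precisely the routine checks the paper leaves to the reader. Nothing further is needed.
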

Corollary \ref{GCM1} is not true in 2-dimensional case, even the ring is regular (see Corollary \ref{ng1}).

 \begin{corollary}\label{}Let $(\mathcal{R},\fm)$ be a complete local ring. If $\depth(\frac{\mathcal{R}}{\fa^n})>1$ for all $n\gg 0$, then
$\mathcal{Y}$ is $\Gg$ in $\mathcal{X}$. In particular, suppose $\depth(\mathcal{R})>2$ and  $\fa$ is generated by a regular element, then
$\mathcal{Y}$ is $\Gg$ in $\mathcal{X}$.
\end{corollary}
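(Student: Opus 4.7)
The plan is to invoke the four-term exact sequence from the proof of Observation~\ref{connec},
\[
0 \to {\vpl}_n\Gamma_{\fm}(\mathcal{R}/\fa^n) \to {\vpl}_n \mathcal{R}/\fa^n \to {\vpl}_n\HH^0(\mathcal{X},\widetilde{\mathcal{R}/\fa^n}) \to {\vpl}_n\HH^1_{\fm}(\mathcal{R}/\fa^n)\to 0,
\]
in which the second term equals $\mathcal{R}$ by $\fa$-adic completeness and the third term equals $\HH^0(\widehat{\mathcal{X}}, \mathcal{O}_{\widehat{\mathcal{X}}})$ by the theorem on formal functions. The hypothesis $\depth(\mathcal{R}/\fa^n) > 1$ for $n \gg 0$ says that both $\Gamma_{\fm}$ and $\HH^1_{\fm}$ vanish on $\mathcal{R}/\fa^n$ eventually; since an inverse limit is insensitive to any finite initial segment of the system, both outer limits vanish and we read off the isomorphism $\mathcal{R} \simeq \HH^0(\widehat{\mathcal{X}}, \mathcal{O}_{\widehat{\mathcal{X}}})$.

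To convert this into the comparison $\varphi\colon \HH^0(\mathcal{X}, \mathcal{O}_\mathcal{X}) \simeq \HH^0(\widehat{\mathcal{X}}, \mathcal{O}_{\widehat{\mathcal{X}}})$ that defines $\Gg$, I would use the parallel sequence
\[
0 \to \Gamma_{\fm}(\mathcal{R}) \to \mathcal{R} \to \HH^0(\mathcal{X}, \mathcal{O}_\mathcal{X}) \to \HH^1_{\fm}(\mathcal{R}) \to 0
\]
and check that $\depth(\mathcal{R}) > 1$. The vanishing $\Gamma_{\fm}(\mathcal{R}) = 0$ is immediate by Krull intersection: any $\fm$-torsion $x \in \mathcal{R}$ maps to zero in each $\Gamma_{\fm}(\mathcal{R}/\fa^n)$, so it lies in $\bigcap_n \fa^n = 0$. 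The main obstacle is the vanishing of $\HH^1_{\fm}(\mathcal{R})$, which does not formally follow from the quotient-depth hypothesis: the snake lemma on $0\to \fa^n\to \mathcal{R}\to \mathcal{R}/\fa^n\to 0$ only yields $\HH^1_{\fm}(\fa^n)\simeq \HH^1_{\fm}(\mathcal{R})$, and commuting $\HH^1_{\fm}$ with the inverse limit $\bigcap \fa^n=0$ is not a free operation. I would therefore rely on the extra depth supplied in the ``in particular'' case.

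For that case, $\fa = (x)$ with $x$ regular on $\mathcal{R}$ makes $x^n$ regular too, so $\depth(\mathcal{R}/(x^n)) = \depth(\mathcal{R}) - 1 > 1$, which triggers the main part of the proof. Simultaneously $\depth(\mathcal{R}) > 2$ kills both $\Gamma_{\fm}(\mathcal{R})$ and $\HH^1_{\fm}(\mathcal{R})$, providing the identification $\HH^0(\mathcal{X}, \mathcal{O}_\mathcal{X}) \simeq \mathcal{R}$. Concatenating the two isomorphisms through $\mathcal{R}$ reproduces the natural comparison map $\varphi$, and the conclusion that $\mathcal{Y}$ is $\Gg$ in $\mathcal{X}$ drops out.
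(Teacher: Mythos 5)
Your treatment of the ``in particular'' clause is complete and follows the same route as the paper: kill $\Gamma_\fm(\mathcal{R}/\fa^n)$ and $\HH^1_\fm(\mathcal{R}/\fa^n)$ for $n\gg0$ so as to identify $\HH^0(\widehat{\mathcal{X}},\mathcal{O}_{\widehat{\mathcal{X}}})$ with $\vpl_n\mathcal{R}/\fa^n=\mathcal{R}$, and separately identify $\HH^0(\mathcal{X},\mathcal{O}_{\mathcal{X}})$ with $\mathcal{R}$. (The paper argues termwise, obtaining $\HH^0(\mathcal{X},\widetilde{\mathcal{R}/\fa^n})\simeq\mathcal{R}/\fa^n$ for $n\gg0$ before passing to the limit, which sidesteps any worry about exactness of $\vpl$ applied to the four-term sequence; your version is equivalent because the outer systems are eventually zero.) The issue you flag --- that $\depth(\mathcal{R}/\fa^n)>1$ does not obviously force $\HH^1_\fm(\mathcal{R})=0$, which is needed to identify $\HH^0(\mathcal{X},\mathcal{O}_{\mathcal{X}})$ with $\mathcal{R}$ and hence to conclude that the \emph{natural map} $\varphi$ is an isomorphism --- is real; the paper's own proof is in fact silent on this point, as it only computes $\HH^0(\widehat{\mathcal{X}},\mathcal{O}_{\widehat{\mathcal{X}}})\simeq\mathcal{R}$ and never returns to $\HH^0(\mathcal{X},\mathcal{O}_{\mathcal{X}})$.

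However, by declining to close that gap you leave the first (general) assertion of the corollary unproved, and it can be closed. By Brodmann's theorem on asymptotic stability, $\bigcup_n\Ass(\mathcal{R}/\fa^n)$ is a finite set; since $\depth(\mathcal{R}/\fa^n)>0$ for $n\ge n_0$ and $\Gamma_\fm(\mathcal{R})=0$ (your Krull-intersection argument), none of these primes, nor any associated prime of $\mathcal{R}$, equals $\fm$. Prime avoidance then yields a single $z\in\fm$ that is regular on $\mathcal{R}$ and on every $\mathcal{R}/\fa^n$ with $n\ge n_0$. Now $(\mathcal{R}/z\mathcal{R})\otimes\mathcal{R}/\fa^n=(\mathcal{R}/\fa^n)/z(\mathcal{R}/\fa^n)$ has depth at least $1$ for $n\ge n_0$, so repeating your Krull-intersection argument for the complete ring $\mathcal{R}/z\mathcal{R}$ gives $\Gamma_\fm(\mathcal{R}/z\mathcal{R})=0$, whence $\depth(\mathcal{R})\ge2$ and $\HH^1_\fm(\mathcal{R})=0$. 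With that supplement your argument proves the general statement, and the ``in particular'' clause then follows exactly as you say from $\depth(\mathcal{R}/(x^n))=\depth(\mathcal{R})-1>1$.
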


\begin{proof}
Recall that $\Gamma_{\fm}(\mathcal{R}/ \fa^n)=\HH^1_{\fm}(\mathcal{R}/ \fa^n)=0$ for $n\gg0$.  In view of the following  exact sequence $$0\lo\Gamma_{\fm}(\mathcal{R}/ \fa^n)
\lo \mathcal{R}/ \fa^n\lo \HH^0(\mathcal{X},\widetilde{\mathcal{R}/ \fa^n})\lo \HH^1_{\fm}(\mathcal{R}/ \fa^n)\lo 0,$$we see
$\HH^0(\mathcal{X},\widetilde{\mathcal{R}/ \fa^n})\simeq \mathcal{R}/ \fa^n $ for $n\gg0$.  Therefore, $$\HH^0(\widehat{\mathcal{X}},\mathcal{O}_{\widehat{\mathcal{X}}})\simeq
{\vpl}_n \HH^0(\mathcal{X},\widetilde{\mathcal{R}/ \fa^n})\simeq{\vpl}_n (\frac{\mathcal{R}}{ \fa^n}) =\mathcal{R}^{\wedge_{\fa}}\stackrel{(\ast)}\simeq\mathcal{R},$$
where $(\ast)$ follows by the following easy observation. Suppose  a module is complete with respect to $\fm$-adic topology, then
it is  complete  with respect to $\fa$-adic topology too.
\end{proof}



\begin{thebibliography}{99}


\bibitem{comment}
M. Auslander, \emph{Comments on the functor Ext}, Topology {\bf{8}} (1969), 151--166.

\bibitem{a}M. Asgharzadeh,
\emph{Homological subsets of  Spec}, arXiv:1701.03001.


\bibitem{AD} M. Asgharzadeh and K. Divaani-Aazar, \emph{ Finitenes properties
of formal local cohomology modules and Cohen-Maculayness}, Comm. in
Algebra, {\bf 39}(3), (2011), 1082--1103.

\bibitem{bhat} B.
Bhatt and A.J.  de Jong,  \emph{Lefschetz for local Picard groups}, Ann. Sci. \'{E}cole Norm. Sup. (4){\bf 47} (2014),  833--849.

\bibitem{call}
 F.W. Call, \emph{On local cohomology modules}, J. Pure Appl. Algebra 43 (1986), no. 2, 111--117.


\bibitem{bea}N.R.
Baeth, and A. Geroldinger, \emph{Monoids of modules and arithmetic of direct-sum decompositions}, Pacific J. Math. {\bf {271}} (2014), no. 2, 257--319.

\bibitem{brid}
M. Bridger, \emph{The $R$-modules $\Ext^i_R(M,R)$ and other invariants of $M$}, Thesis (Ph.D.)-Brandeis University, (1967).

\bibitem{DNT}
K. Divaani-Aazar, R. Naghipour and M. Tousi, \emph{
Cohomological dimension of certain algebraic varieties}, Proc. Amer. Math. Soc.,
{\bf 130}(12), (2002), 3537--3544.

\bibitem{elkik}R.
Elkik, \emph{
Solutions d'\'{e}quations \`{a} coefficients dans un anneau hens\'{e}lien},
Ann. Sci. \'{E}cole Norm. Sup. (4) {\bf{6}} (1973), 553--603 (1974).


\bibitem{fal2}G. Faltings,
\emph{Algebraisation of some formal vector bundles}, Ann. of Math. {\bf{110}}
 (1979) 501--514.
 
\bibitem{ferr} D. Ferrand and M. Raynaud, \emph{Fibres formelles d'un anneau local noeth\'{e}rien}, Ann. Sci. \'{E}cole Norm. Sup. (4) {\bf{3}} (1970), 295--311.

\bibitem{sean}
A.J. Frankild, S. Sather-Wagstaff  and R. Wiegand, \emph{Ascent of module structures, vanishing of Ext, and extended modules}, Special volume in honor of Melvin Hochster. Michigan Math. J. {\bf {57}} (2008), 321--337.



\bibitem{levy}
 L.S. Levy and C.J. Odenthal, \emph{
Krull-Schmidt theorems in dimension 1},
Trans. AMS, {\bf{348}} (1996), 3391--3455.

\bibitem{lw}
G. Leuschke and R. Wiegand,
\emph{Cohen-Macaulay representations}, Mathematical Surveys and Monographs, {\bf{181}}, AMS, Providence, RI, (2012).




\bibitem{sga2}
A. Grothendieck,
\emph{Cohomologie locale des faisceaux coh{e}rents et th{e}oremes de Lefschetz locaux et globaux} (SGA 2).
S\'{e}minaire de G\'{e}om\'{e}trie Alg\'{e}brique du Bois Marie  1962. Amsterdam: North Holland Pub. Co. (1968).

\bibitem{har} R.
Hartshorne, \emph{Coherent functors}, Adv. Math. {\bf{140}} (1998),  44--94.



\bibitem{heit}
R. Heitmann, \emph{Characterizations of completions of unique factorization domains},
Trans. AMS. {\bf{337}} (1993) 379--387.

\bibitem{hs} J. Herzog and  H. Sanders, \emph{The Grothendieck group of invariant rings and of simple hypersurface singularities}, In: Singularities, representation of algebras, and vector bundles (Lambrecht, 1985), 134--149, 
 Lecture Notes in Math., {\bf {1273}}, Springer, Berlin, (1987).

\bibitem{hiro}H.
Hironaka and H. Matsumura,
\emph{Formal functions and formal embeddings},
J. Math. Soc. Japan 
 {\bf{20}} (1968),52--82.
 
\bibitem{h} G.
Horrocks, \emph{Vector bundles on the punctured spectrum of a local ring}, Proc. LMS.,
{\bf {14}}, (1964), 689--713.



\bibitem{kmv}B.
Keller, D. Murfet and M. Van den Bergh, \emph{On two examples by Iyama and Yoshino}, Compos. Math. {\bf {147}} (2011), 591--612.

\bibitem{kk}Y.
Kamoi and  K. Kurano, \emph{
On maps of Grothendieck groups induced by completion},
J. Algebra {\bf {254}} (2002),  21--43.

\bibitem{Mat}
H. Matsumura, \emph{Commutative ring theory}, Cambridge Studies in Advanced Math, \textbf{8}, (1986).

\bibitem{ps}
C. Peskine and L. Szpiro, \emph{Dimension projective finie et cohomologie locale}, IHES. Publ. Math. {\bf
42}, (1973),  49--119.

\bibitem{tony}Tony J. Puthenpurakal,
\emph{Grothendieck groups and completions of Gorenstein local rings}, arXiv:2510.13225 [math.AC].


\bibitem{gcm}
 P. Schenzel,  \emph{Einige Anwendungen der lokalen Dualit\"{a}t und verallgemeinerte Cohen-Macaulay-Moduln},
Math. Nachr., {\bf{69}} (1975), 227--242.



\bibitem{Stack}  Stacks Project Authors. Stacks Project. http://stacks.math.columbia.edu, 2018.


\bibitem{weibel}C.A. Weibel,
\emph{The K-book:
an introduction to Algebraic K-theory}, Graduate Studies in
Math. {\bf{145}}, Amer. Math. Soc., Providence, RI (2013).


\bibitem{two}
D. Weston, \emph{On descent in dimension two and nonsplit Gorenstein modules},
J. Algebra 
{\bf {118}} (1988), 263--275.
\end{thebibliography}
\end{document}